\DeclareMathAlphabet{\curly}{U}{rsfs}{m}{n}
\theoremstyle{remark}
\newtheorem{example}{Example}
\newtheorem{remark}{Remark}
\theoremstyle{plain}
\newtheorem{defn}{Definition}
\newtheorem{lemma}{Lemma}[section]
\newtheorem{thm}{Theorem}
\numberwithin{equation}{section}
\newcommand{\bal}{\[\begin{aligned}}
\newcommand{\eal}{\end{aligned}\]}
\newcommand{\be}{\begin{equation}}
\newcommand{\ee}{\end{equation}}
\newcommand{\benn}{\begin{equation*}}
\newcommand{\eenn}{\end{equation*}}
\renewcommand{\leq}{\leqslant}
\renewcommand{\geq}{\geqslant}
\DeclareMathOperator{\sgn}{sgn}
\newcommand\ba{\begin{eqnarray}}
\newcommand\ea{\end{eqnarray}}
\newcommand{\lb}{\left(}
\newcommand{\rb}{\right)}
\newcommand{\lcb}{\left\{}
\newcommand{\rcb}{\right\}}
\newcommand{\lsb}{\left[}
\newcommand{\rsb}{\right]}
\newcommand{\mrm}{\mathrm}
\newcommand{\nn}{\nonumber}
\newcommand{\ceil}[1]{\left\lceil #1 \right\rceil }
\colorlet{darkblue}{blue!70!black}
\colorlet{darkgreen}{green!70!black}
\colorlet{darkorange}{orange!70!black}
\colorlet{darkcyan}{cyan!70!black}
\def\bs#1{{\color{red}#1}}
\def\extratext#1{#1}
\begin{document}
\title[A visual perspective on the BSD conjecture]{A visual perspective on the Birch and Swinnerton-Dyer conjecture through a family of approximations of $L$-functions}
\author{Maria Nastasescu, Bogdan Stoica, and Alexandru Zaharescu}

\address{Maria Nastasescu: Department of Mathematics, 2033 Sheridan Road, Northwestern University, Evanston, IL 60207, USA}
\email{mnastase@math.northwestern.edu}

\address{Bogdan Stoica: Martin A. Fisher School of Physics, Brandeis University, Waltham, MA 02453, USA}
\email{bstoica@brandeis.edu}

\address{Alexandru Zaharescu: Department of Mathematics, 1409 West Green Street, University
of Illinois at Urbana-Champaign, Urbana, IL 61801, USA, 
and Simion Stoilow Institute of Mathematics of the Romanian Academy, 
P.O. Box 1-764, RO-014700 Bucharest, Romania}
\email{zaharesc@illinois.edu}

\begin{abstract}

We investigate the properties of a family of approximations of the Hasse-Weil $L$-function associated to an elliptic curve $E$ over $\mathbb{Q}$. We give a precise expression for the error of the approximations, and provide a visual interpretation of the analytic rank $m$ of $E$ as a sequence of near regular polygons around the center of the critical strip, each with vertices at the zeros of the approximations.

\end{abstract}

\thanks{2020 Mathematics Subject Classification: Primary 11M41; Secondary 11G40}
\keywords{$L$-functions associated to elliptic curves, distribution of zeros,  Birch and Swinnerton-Dyer conjecture}

\maketitle

\tableofcontents

\section{Introduction}

\noindent In this paper we define a family of approximations for the Hasse-Weil $L$-function associated to an elliptic curve over $\mathbb{Q}$. Our construction generalizes the prescription introduced by Matiyasevich \cite{Ma1}, who defined a family of approximations of the Riemann zeta function by considering certain regularized truncated Euler products.  We prove that our family approximates with high precision the corresponding $L$-function, and we give a precise expression for the error term.

In the case of the Riemann zeta function, the approximations introduced in \cite{Ma1} are conjectured to satisfy a Bounded Riemann Hypothesis \cite{Ma1,NastasescuZaharescu}. This means that for any integer $k\geq1$, there exists a level of the approximation (i.e. a number of primes included in the approximation) such that the first $k$ zeros of the approximation, counted with possible multiplicity in order of ascending positive imaginary part, are on the critical line. Matiyasevich also considered approximations to the $L$-function associated to the Ramanujan tau function. These too are expected to satisfy a bounded~RH.

In contrast, for $L$-functions associated to elliptic curves, we will show that the Bounded Riemann Hypothesis does not hold for our approximations, see for example Figures~\ref{fig1} -- \ref{fig4} (pages 5, 6). We instead are able to provide a visual interpretation of the analytic rank $m$ of the elliptic curves in terms of the zeros of the approximations closest to the center of the critical strip. As we increase the order of the approximation, these zeros arrange themselves as the vertices of increasingly smaller near-regular polygons around the center.

Moreover, we find that the zeros of these approximations encode important arithmetic information pertaining to the elliptic curve. In particular, counting the number of zeros in a small disk around the center, we can visualize the equality between the analytic rank and the arithmetic rank for a given elliptic curve, providing a visual perspective of the Birch and Swinnerton-Dyer conjecture. Additionally, these approximations recover information about the leading coefficient in the Taylor series expansion of the $L$-function at the central point (see Theorem \ref{thm3ngon}), which in turn captures important information on the elliptic curve (see the presentation of the BSD conjecture \cite{wilescitation}). The behavior of these polygons is also consistent with the Sato-Tate conjecture, since the size of the polygons is directly influenced by the size of the corresponding coefficients $a_{p_{N+1}}$. 

In Figures \ref{fig1}, \ref{fig2} the zeros of each approximation near the central point appear to form the vertices of a near square, and in Figure \ref{fig3} the zeros appear to lie close to the vertices of a regular hexagon around the central point. In Figure~\ref{fig4} four zeros are again close to the vertices of a square, and a fifth zero is at the central point. Figures \ref{fig1}--\ref{fig2}, \ref{fig3}, \ref{fig4} correspond to $L$-functions of elliptic curves of rank 4, 6, and 5 respectively. We will study this phenomenon in Theorem~\ref{thm3ngon} below. 

We now explain our construction. This construction takes as input the local factors, and returns an approximation $\Lambda_N(E,s)$ for the completed $L$-function $\Lambda(E,s)$ associated to the local factors. It is a requirement for the construction that the local factors correspond to an $L$-function, that is $\Lambda(E,s)=g(s)\prod_{p=2}^\infty L_p(E,s)$, for $s$ in a right half-plane. Here $\Lambda(E,s)$ is the completed $L$-function, $g(s)$ is the local factor at infinity, and $L_p(E,s)$ is the local factor at place $p$.

The first step is to construct a finite Euler product of the local factors $L_p(E,s)$, multiplied over all places (Archimedean and finite) up to a largest prime $p_N$, i.e.
\be
\label{eq11Euler}
\Lambda_N^\mrm{Euler}(E,s) \coloneqq g(s) \prod_{p=2}^{p_N} L_p(E,s).
\ee 
When approximating the Riemann zeta function we have $L_p(s)=1/(1-p^{-s})$. \extratext{In the case of $L$-functions associated to elliptic curves, if $E$ has good reduction at $p$ we have}
\be
L^{-1}_p(E,s) = 1 - a_p p^{-(s+1/2)} + p^{-2s},
\ee
and if $E$ has bad reduction at $p$ we have
\be
L^{-1}_p(E,s) =1 + a_p p^{-s-\frac{1}{2}},
\ee
with $a_p\in\{0,\pm 1\}$.

\extratext{Matiyasevich's construction relies on extracting the holomorphic part of $\Lambda_N^\mrm{Euler}(s)$. Unlike the function $\Lambda(E,s)$ that we want to approximate, the finite Euler product \eqref{eq11Euler} has an infinite number of poles in the complex $s$-plane. We will need to remove these poles if we want to recover $\Lambda(E,s)$ from the finite Euler product $\Lambda_N^\mrm{Euler}(E,s)$. In order to do this, we define the \emph{principal part} $\Lambda_N^\mrm{pp}(E,s)$ of $\Lambda_N^\mrm{Euler}(E,s)$ as the sum of principal parts of the Laurent expansions at all these poles, and we subtract this principal part from $\Lambda_N^\mrm{Euler}(E,s)$, in order to obtain the holomorphic part of $\Lambda_N^\mrm{Euler}(E,s)$. The sum over poles in $\Lambda_N^\mrm{pp}(E,s)$ quickly converges, from the properties of the Gamma function. Unlike $\Lambda(E,s)$, the functions $\Lambda_N^\mrm{Euler}(E,s)$ and $\Lambda_N^\mrm{pp}(E,s)$ do not satisfy a functional equation, so one must (anti-)symmetrize under $s\to1-s$ to obtain the approximation $\Lambda_N(E,s)$ for the $L$-function. In precise terms, we define}
\be
\label{eq11}
\Lambda_N(E,s) \coloneqq \Lambda_N^\mrm{Euler}(E,s) - \Lambda_N^\mrm{pp}(E,s) + (-1)^P \lsb \Lambda_N^\mrm{Euler}(E,1-s) - \Lambda_N^\mrm{pp}(E,1-s) \rsb .
\ee
In the case of the Riemann zeta function $(-1)^P\coloneqq 1$ and the equivalent construction to Eq. \eqref{eq11} approximates the completed zeta function $\xi(s)$ to order $\mathcal{O}\lb \exp( -\mathcal{K} p_{N+1}^2 )\rb$, with \extratext{$p_{N+1}$ the next prime number following $p_N$, and} $\mathcal{K}$ a numerical constant that does not depend on~$N$ (see \cite{NastasescuZaharescu}). In the case of $L$-functions associated to elliptic curves $(-1)^P$ is the root number, and we will show that Eq. \eqref{eq11} approximates $\Lambda(E,s)$ to order $\mathcal{O}\lb \exp( -\mathcal{K'} p_{N+1} )\rb$. More precisely, we have the following theorem.

\begin{thm}
\label{theoremintro}
Let $E$ be an elliptic curve $E$ over $\mathbb{Q}$ \extratext{of conductor $C$ and root number $(-1)^P$, and let $\Lambda(E,s)$ be its completed $L$-function.} For any $\mathcal{R}>0$, there exist constants $B_1(\Lambda,\mathcal{R})>0$, $B_2(\Lambda,\mathcal{R})>0$ depending only on $\Lambda(E,s)$ and $\mathcal{R}$, such that for any $N\geq 1$, we have
\ba
\label{eq13intro}
& &\left|\Lambda(E,s) - \Lambda_N(E,s) -\frac{ a_{p_{N+1}} C^\frac{1}{4}}{\pi p_{N+1}} e^{-\frac{2 \pi  p_{N+1}}{\sqrt{C}}}  \lsb 1 + (-1)^P + \frac{(2s-1)C^\frac{1}{2}}{4\pi p_{N+1}} \lb 1 - (-1)^P \rb \rsb \right| \\
&\leq & B_1(\Lambda,\mathcal{R}) \frac{ \left| a_{p_{N+1}}\right| }{p^3_{N+1}} e^{-\frac{2 \pi  p_{N+1}}{\sqrt{C}}} + B_2(\Lambda,\mathcal{R}) \frac{1}{\lb p_{N+2}\rb^{1-(-1)^P/2}} e^{-\frac{2\pi p_{N+2}}{\sqrt{C}}}, \nn
\ea
uniformly for all $s\in\mathbb{C}$ with $|s|\leq \mathcal{R}$, where $\Lambda_N(E,s)$ is defined in \extratext{Eqs. \eqref{eq11Euler}, \eqref{eq11}}.
\end{thm}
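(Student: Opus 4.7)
The plan is to derive an explicit integral representation for the difference $\Lambda(E,s) - \Lambda_N(E,s)$ and then to extract its dominant contribution via the large-parameter asymptotics of the incomplete Gamma function. Using the Mellin identity $\Gamma(s+1/2)\lb\sqrt{C}/(2\pi n)\rb^{s+1/2} = \int_0^\infty u^{s+1/2} e^{-2\pi n u/\sqrt{C}}\,du/u$ termwise in the Dirichlet series of $L(E,s)$, one obtains $\Lambda(E,s) = \int_0^\infty f(u)\,u^{s+1/2}\,du/u$, where $f(u) = \sum_{n\ge1} a_n e^{-2\pi n u/\sqrt{C}}$ is the theta function attached to the modular form of $E$. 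The modularity $f(1/u) = (-1)^P u^2 f(u)$ then folds this into the manifestly symmetric form
\[
\Lambda(E,s) = \int_1^\infty f(u)\lsb u^{s+1/2} + (-1)^P u^{3/2-s}\rsb\frac{du}{u},
\]
which is entire in $s$ and makes the functional equation transparent.

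The main technical step is to derive the analogous identity
\[
\Lambda_N(E,s) = \int_1^\infty f_N(u)\lsb u^{s+1/2} + (-1)^P u^{3/2-s}\rsb\frac{du}{u},
\]
where $f_N(u) := \sum_{n \in S_N} a_n e^{-2\pi n u/\sqrt{C}}$ and $S_N$ denotes the set of positive integers whose prime factors all lie in $\{2,\ldots,p_N\}$. This reduces to showing that $F_N(s) := \Lambda_N^{\mrm{Euler}}(E,s) - \Lambda_N^{\mrm{pp}}(E,s)$ equals $\int_1^\infty f_N(u)\,u^{s+1/2}\,du/u$. In the region of absolute convergence one has $\Lambda_N^{\mrm{Euler}}(s) = \int_0^\infty f_N(u)\,u^{s+1/2}\,du/u$; the content of the identity is that the sum of Laurent principal parts at the (infinitely many) poles of $\Lambda_N^{\mrm{Euler}}$---arising from the zeros of $1 - a_p p^{-(s+1/2)} + p^{-2s}$ for each $p\le p_N$ and from the poles of $\Gamma(s+1/2)$---coincides with the meromorphic continuation of the singular piece $\int_0^1 f_N(u)\,u^{s+1/2}\,du/u$. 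This is the hardest part of the proof: convergence of the pole sum is guaranteed by the exponential decay of $\Gamma$ on vertical lines, while the identification with the Mellin singularities follows from a contour-shift argument. Antisymmetrizing then produces the claimed representation for $\Lambda_N$.

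Subtracting the two representations yields $\Lambda(E,s) - \Lambda_N(E,s) = \sum_{n \notin S_N} a_n \int_1^\infty e^{-2\pi n u/\sqrt{C}}\lsb u^{s+1/2} + (-1)^P u^{3/2-s}\rsb du/u$. I would isolate the $n = p_{N+1}$ term, rewrite each integral as $\lambda^{-\alpha}\Gamma(\alpha,\lambda)$ with $\lambda = 2\pi p_{N+1}/\sqrt{C}$, and apply the large-$\lambda$ expansion
\[
\Gamma(\alpha,\lambda) = \lambda^{\alpha-1}e^{-\lambda}\lsb 1 + \frac{\alpha-1}{\lambda} + \frac{(\alpha-1)(\alpha-2)}{\lambda^2} + O(\lambda^{-3}) \rsb.
\]
Combining the two pieces $\alpha = s+1/2$ and $\alpha = 3/2 - s$, the coefficient $(1+(-1)^P)$ produces the explicit leading term stated in the theorem, while the coefficient $(1 - (-1)^P)$ multiplies the subleading contribution proportional to $(2s-1)$ that is retained. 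The remainder from truncating the asymptotic series at second order contributes $|a_{p_{N+1}}|\,p_{N+1}^{-3}\,e^{-2\pi p_{N+1}/\sqrt{C}}$ uniformly for $|s|\le\mathcal{R}$, accounting for the first error term.

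Finally, the remaining indices $n \notin S_N$ with $n > p_{N+1}$ (namely $n = p_{N+2}, p_{N+1}^2, 2 p_{N+1},\ldots$) are all exponentially smaller than the $n=p_{N+1}$ term; the dominant survivor is $n = p_{N+2}$, while all other contributions sum to a geometric tail in $e^{-2\pi p_{N+1}/\sqrt{C}}$ much smaller than the $p_{N+2}$ contribution. For the $n=p_{N+2}$ integral, the same incomplete-Gamma asymptotic gives a leading piece of order $\lambda_{N+2}^{-1}$ when $(-1)^P = 1$ and of order $\lambda_{N+2}^{-2}$ when $(-1)^P = -1$ (the latter because the $(1+(-1)^P)$ term then vanishes and one captures instead the next-order $(2s-1)/\lambda_{N+2}$ correction); combining with the Hasse bound $|a_{p_{N+2}}| \le 2\sqrt{p_{N+2}}$ yields precisely the stated $p_{N+2}^{-(1-(-1)^P/2)}\,e^{-2\pi p_{N+2}/\sqrt{C}}$ bound. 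The constants $B_1, B_2$ absorb all implicit bounds arising from the asymptotic expansions uniformly for $|s|\le\mathcal{R}$.
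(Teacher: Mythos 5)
Your approach is genuinely different from the paper's, although the two meet at the same final formula. You express $\Lambda_N(E,s)$ as a truncated Mellin transform of the partial theta function $f_N(u) = \sum_{n\in S_N}a_n e^{-2\pi nu/\sqrt{C}}$, so that $\Lambda(E,s)-\Lambda_N(E,s)$ becomes $\sum_{n\notin S_N}a_n\int_1^\infty e^{-2\pi nu/\sqrt{C}}[u^{s+1/2}+(-1)^Pu^{3/2-s}]\,du/u$, and then read off the asymptotics from incomplete Gamma functions. The paper instead proves a contour-integral identity (Theorem \ref{thisisatheorem3}) for $\Lambda-\Lambda_N$ expressed on a vertical line $\Re(s)=\sigma>1$, by integrating around arbitrarily large ``sparse'' rectangles (Definition \ref{sparsedef}) that avoid the infinitely many local-factor poles, and bounding $\Lambda_N^{\mathrm{pp}}$ on these contours (Lemma \ref{lemma1p5bound}). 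It then computes the kernel $K_n(s_0)=\int_{\Re(s)=\sigma}g(s)n^{-s-1/2}(s-s_0)^{-1}ds$ by a residue calculation to reach the same incomplete-Gamma expression (Theorem \ref{thm2approx}). Your final step --- separating $n=p_{N+1}$, applying the large-$\lambda$ expansion of $\Gamma(\alpha,\lambda)$, and bounding the $n\geq p_{N+2}$ tail with the Hasse bound and Bertrand's postulate --- matches the paper, including the root-number-dependent exponent $1-(-1)^P/2$.

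The genuine gap is the step you yourself flag as ``the hardest part.'' You need the identity
\[
\Lambda_N^{\mathrm{Euler}}(E,s)-\Lambda_N^{\mathrm{pp}}(E,s)\ =\ \frac{2}{C^{1/4}}\int_1^\infty f_N(u)\,u^{s+1/2}\,\frac{du}{u},
\]
equivalently, that the meromorphic continuation of $\frac{2}{C^{1/4}}\int_0^1 f_N(u)u^{s+1/2}\,du/u$ equals $\Lambda_N^{\mathrm{pp}}(E,s)$ \emph{exactly}, with no entire remainder. This is not a formal manipulation: the two sides automatically share the same poles and principal parts, but that only forces their difference to be an entire function. Ruling out a nonzero entire correction requires a growth estimate on $\Lambda_N^{\mathrm{pp}}$, which is an infinite sum over an infinite set of poles that get arbitrarily close to one another in the imaginary direction (near the poles of the gamma factor combined with the lattice of local-factor poles). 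This is precisely what the paper's Lemmas \ref{shortlemma1p5ish} and \ref{lemma1p5bound} control, and what feeds into the vanishing of the error term $E(N,s_0,T,\tau)$ as $T,\tau\to\infty$. Saying that ``the identification ... follows from a contour-shift argument'' merely names the tool; it does not supply the uniform bound on $\Lambda_N^{\mathrm{pp}}$ away from its poles that makes the contour shift legitimate. The case $N=0$ (where $f_0(u)=e^{-2\pi u/\sqrt{C}}$ has an everywhere-convergent Taylor expansion at $u=0$, so $\int_0^1$ is a pure sum of simple-pole principal parts with no remainder) is deceptively easy; for $N\geq1$ the asymptotic expansion of $f_N$ as $u\to0^+$ involves the oscillatory powers $u^{-(s_\star+1/2)}$ coming from the good-reduction poles on $\Re(s_\star)=0$, and the argument must be made with care. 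Until you supply that estimate, your proposal does not constitute a proof of Theorem \ref{theoremintro}.

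A secondary, minor point: in your Mellin identity the normalization constant $2C^{-1/4}$ relating $g(s)/n^{s+1/2}$ to $\int_0^\infty u^{s+1/2}e^{-2\pi nu/\sqrt{C}}\,du/u$ is dropped; this does not affect the structure of the argument, but the final constants in the explicit main term of Eq.~\eqref{eq13intro} depend on tracking it.
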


\extratext{This theorem shows that the functions $\Lambda_N(E,s)$ are indeed approximations for the $L$-function $\Lambda(E,s)$, and it gives us a sharp bound for the difference $\Lambda_N(E,s)-\Lambda(E,s)$. A fact that will become important later is that for almost all $N$, Eq. \eqref{eq13intro} provides an asymptotic formula for the difference $\Lambda_N(E,s)-\Lambda(E,s)$. More precisely, when the coefficient $a_{p_{N+1}}$ is not too small and when the difference between the consecutive primes $p_{N+2}-p_{N+1}$ is not too small, the two terms on the right-hand side of Eq. \eqref{eq13intro} are dominated by the term involving $a_{p_{N+1}}$ on the left-hand side.}

Theorem \ref{theoremintro} implies the following presentation for the completed $L$-function, in analytic convention.
\begin{thm}
\label{thmextra}
Let $E$ be an elliptic curve $E$ over $\mathbb{Q}$ and let $\Lambda(E,s)$ be its completed $L$-function. For any $s_0\in \mathbb{C}$ and $\sigma\in\mathbb{R}$ such that $\sigma > \Re(s_0)$, $\sigma > 1- \Re(s_0)$, we have
\be
\Lambda(E,s_0) = \frac{1}{2\pi i} \int_{\Re(s)=\sigma} g(s) \sum_{n = 1}^\infty \frac{ a_n }{n^{s+\frac{1}{2}}} \lb \frac{1}{s-s_0} + (-1)^P \frac{1}{s-1+s_0} \rb ds,
\ee
where integers $a_n$ are the coefficients of the Dirichlet series.
\end{thm}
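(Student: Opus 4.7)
The plan is to rewrite the right-hand side as a contour integral of the entire completed $L$-function $\Lambda(E,s)$, split it into two pieces, convert one of them into an integral along the symmetric line $\Re(s)=1-\sigma$ via the functional equation, and apply the residue theorem on a rectangular contour containing $s_0$.

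First, I would observe that the two hypotheses force $\sigma>\max(\Re(s_0),1-\Re(s_0))\ge 1/2$, so the Dirichlet series $L(E,s)=\sum_{n\ge 1} a_n n^{-s-1/2}$ converges absolutely on the line $\Re(s)=\sigma$ (using $|a_n|\ll_\varepsilon n^{\varepsilon}$). Combined with the exponential decay of $g(s)$ on vertical lines coming from Stirling, this justifies interchanging the summation and the integration and identifies the integrand with $\Lambda(E,s)\bigl(\tfrac{1}{s-s_0}+\tfrac{(-1)^P}{s-1+s_0}\bigr)$. Denoting by $I_1$ and $I_2$ the contributions of the two fractions, the theorem reduces to showing $I_1+(-1)^P I_2=\Lambda(E,s_0)$.

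Second, I would transform $I_2$ by applying the functional equation $\Lambda(E,s)=(-1)^P\Lambda(E,1-s)$ and substituting $u=1-s$. This reverses the orientation onto $\Re(u)=1-\sigma$ and replaces $s-1+s_0$ by $s_0-u$, giving after simplification
\[
(-1)^P I_2 \;=\; -\frac{1}{2\pi i}\int_{\Re(u)=1-\sigma}\frac{\Lambda(E,u)}{u-s_0}\,du,
\]
so that
\[
I_1+(-1)^P I_2 \;=\; \frac{1}{2\pi i}\left(\int_{\Re(s)=\sigma}-\int_{\Re(s)=1-\sigma}\right)\frac{\Lambda(E,s)}{s-s_0}\,ds.
\]

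Third, I would close these two vertical lines into the rectangle with vertices $(1-\sigma)\pm iT$ and $\sigma\pm iT$. Since the hypothesis gives $1-\sigma<\Re(s_0)<\sigma$ and $\Lambda(E,s)$ is entire, the only pole of the integrand inside the rectangle is the simple pole of $1/(s-s_0)$ at $s_0$, with residue $\Lambda(E,s_0)$; the residue theorem then yields the desired identity in the limit $T\to\infty$, provided the two horizontal segments at $\Im(s)=\pm T$ vanish. This is the main technical point, but it is standard: Stirling gives $|g(\sigma'+it)|$ exponentially small in $|t|$ uniformly for $\sigma'\in[1-\sigma,\sigma]$, while convexity bounds (or Phragm\'en--Lindel\"of on the strip $1-\sigma\le\Re(s)\le\sigma$) yield only polynomial growth for $L(E,s)$; hence $\Lambda(E,s)/(s-s_0)$ decays exponentially in $|\Im(s)|$ uniformly in that strip, killing the horizontal contributions and completing the argument.
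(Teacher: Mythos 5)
Your proof is correct in structure and takes a genuinely different---and considerably more direct---route than the paper. The paper derives this theorem as a corollary of its approximation machinery: using contour deformation together with a uniform bound on the principal part $\Lambda_N^{\mathrm{pp}}$, it first shows that
\[
\Lambda_N(E,s_0)=\frac{1}{2\pi i}\int_{\Re(s)=\sigma}\Lambda_N^{\mathrm{Euler}}(E,s)\left(\frac{1}{s-s_0}+(-1)^P\frac{1}{s-1+s_0}\right)ds
\]
for each finite $N$, then lets $N\to\infty$ and invokes Theorem~\ref{theoremintro}. Your argument bypasses all of that: once the integrand is recognized as $\Lambda(E,s)\left(\frac{1}{s-s_0}+(-1)^P\frac{1}{s-1+s_0}\right)$, you reflect the second fraction across the functional equation onto $\Re(s)=1-\sigma$, close the two vertical lines into a rectangle, and read off the residue $\Lambda(E,s_0)$ at $s=s_0$; the horizontal legs vanish by Stirling plus a convexity/Phragm\'en--Lindel\"of bound. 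This is a self-contained Mellin-type computation that uses only the analytic continuation and functional equation of $\Lambda(E,s)$, and it makes the mechanism behind the identity more transparent than the paper's detour through the $\Lambda_N$ apparatus.

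One inaccuracy to correct: in the analytic normalization in force here, $L_p^{-1}(E,s)=1-a_pp^{-s-1/2}+p^{-2s}$ with the Hasse bound $|a_p|<2\sqrt{p}$, so by multiplicativity $|a_n|\ll_\varepsilon n^{1/2+\varepsilon}$, not $n^{\varepsilon}$; consequently $\sum_n a_n n^{-s-1/2}$ converges absolutely only for $\Re(s)>1$, and your interchange of sum and integral requires $\sigma>1$ rather than $\sigma>1/2$. This does not derail the argument and in fact mirrors the paper's own proof, whose last line explicitly assumes $\sigma>1$ in order to pass between the Euler product and the Dirichlet series. The theorem's hypothesis permits $1/2<\sigma\le1$, a regime where the series no longer converges on the line and the formula must be read via the analytic continuation of the integrand; neither your writeup nor the paper's addresses that range explicitly, so this is a gap shared with the source.
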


\extratext{Using the asymptotic formula from Eq. \eqref{eq13intro}, one can prove a theorem that explains the phenomenon shown in Figures~\ref{fig1}~--~\ref{fig4}. Given an elliptic curve $E$, its analytic rank $m$ is visually manifested in the geometric configuration of the nearest zeros to the central point of the family of approximations $\Lambda_N (E,s)$. More precisely, for a family $\{N_k\}_{k \in I}$ of sufficiently large integers, the zeros of $\Lambda_{E,N_k}(s)$ near $s=1/2$ arrange themselves close to the vertices of regular $m$-gons (if $m$ is even) and as the vertices of a $(m-1)$-gon, plus a zero at $s=1/2$, if $m$ is odd. As $k \to \infty$, the zeros converge to the central point. We make these statements precise below.}

\extratext{Suppose the order of vanishing of $\Lambda(E,s)$ at $s=1/2$ is $m$. From the functional equation $\Lambda(E,s)=(-1)^P\Lambda(E,1-s)$ we have that $(-1)^m=(-1)^P$, and the $L$-function is even or odd under $s\to1-s$, so that we have the Taylor series expansion around $s=1/2$,
\be
\label{eq15TaylorL}
\Lambda(E,s) = c_m \lb s - 1/2\rb^m + c_{m+2} \lb s - 1/2\rb^{m+2} + \dots.
\ee}

\extratext{\begin{defn}[Limit set configurations]
\label{deflimsets}
For even positive integer $m \geq 2$, let
\be
\mathcal{S}^\mrm{even,+}_m \coloneqq \left\{ \left| \frac{2C^\frac{1}{4}}{\pi c_m}\right|^\frac{1}{m} e^{2\pi i j /m}:j \in \mathbb{Z}, 1\leq j \leq m \right\}
\ee
and
\be
\mathcal{S}^\mrm{even,-}_m\coloneqq \left\{ \left| \frac{2C^\frac{1}{4}}{\pi c_m}\right|^\frac{1}{m} e^{\pi i (2j-1) /m}:j \in \mathbb{Z}, 1\leq j \leq m\right\}.
\ee For odd positive integer $m > 2$, let $\mathcal{S}^\mrm{odd,+}_m\coloneqq \{0\}\cup \mathcal{S}_{m-1}^\mrm{even,+}$ and $\mathcal{S}^\mrm{odd,-}_m\coloneqq \{0\}\cup \mathcal{S}_{m-1}^\mrm{odd,-}$.
\end{defn}}

\extratext{For $m>3$ the sets in Definition \ref{deflimsets} describe regular $m$-gons in the complex plane, with a point at the center when $m$ is odd.}

\extratext{\begin{defn}
Let $\{\rho^{(N)}_1,\dots \rho^{(N)}_m\}$ be the set of $m$ closest zeros of $\Lambda_N(E,s)$ to the central point~$1/2$, and define
\be
A_{\Lambda,N} \coloneqq \lcb \left| \frac{ p_{N+1}}{a_{p_{N+1}}}e^\frac{2\pi p_{N+1}}{\sqrt{C}} \right|^{\frac{1}{m}}\lb\rho^{(N)}_j - \frac{1}{2}\rb : 1\leq j\leq m \rcb.
\ee
\end{defn}}

\extratext{\begin{thm}
\label{thm3ngon}
Let $\Lambda(E,s)$ be an $L$-function associated to an elliptic curve E with conductor $C$, with multiplicative reduction at at least one place. Assume that the order of vanishing at the central point $s=1/2$ is $m\geq 2$, and let $c_m$ be the first nonzero coefficient in the Taylor series at $s=1/2$. Then there exists a set of primes $\mathfrak{B}_\Lambda$ of full density in the set of all primes, that decomposes as
\be
\mathfrak{B}_\Lambda = \mathfrak{B}^{+}_\Lambda \sqcup \mathfrak{B}^{-}_\Lambda,
\ee
with $\mathfrak{B}_\Lambda^+$, $\mathfrak{B}_\Lambda^-$ having density $1/2$ each, such that:
\begin{enumerate}
\item The sequence of sets $A_{\Lambda,N}$ converges to $\mathcal{S}^+_m$ when $p_N\in \mathfrak{B}^+_\Lambda$ tends to infinity,
\item The sequence of sets $A_{\Lambda,N}$ converges to $\mathcal{S}^-_m$ when $p_N\in \mathfrak{B}^-_\Lambda$ tends to infinity.
\end{enumerate}
\end{thm}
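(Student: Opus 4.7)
The plan is to combine the explicit asymptotic of Theorem \ref{theoremintro} with the Taylor expansion \eqref{eq15TaylorL} in a neighborhood of $s = 1/2$, reduce the identification of the nearest zeros $\rho^{(N)}_j$ to a rescaled polynomial equation whose roots form an exact regular $m$-gon, and close with Hurwitz's theorem. Writing $u \coloneqq s - 1/2$, Theorem \ref{theoremintro} together with \eqref{eq15TaylorL} gives, uniformly on $|u| \leq \mathcal{R}$,
\be
\Lambda_N(E,1/2+u) = c_m u^m + O(u^{m+2}) - F_N(u) + E_N(u),
\ee
where $F_N(u) \coloneqq \frac{a_{p_{N+1}} C^{1/4}}{\pi p_{N+1}} e^{-2\pi p_{N+1}/\sqrt C} \bigl[1 + (-1)^P + \frac{u C^{1/2}}{\pi p_{N+1}}(1-(-1)^P)\bigr]$ and $|E_N|$ is controlled by the right-hand side of \eqref{eq13intro}. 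Since $(-1)^P = (-1)^m$, the correction $F_N$ is a nonzero constant in $u$ when $m$ is even, and is purely linear in $u$ with $F_N(0) = 0$ when $m$ is odd; in the latter case $\Lambda_N$ carries an automatic zero at $u = 0$ forced by its built-in functional equation.

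For $m$ even, I would set $\lambda_N \coloneqq (F_N/c_m)^{1/m}$ (any fixed $m$-th root), substitute $u = \lambda_N w$, and divide by $c_m \lambda_N^m$ to turn $\Lambda_N(E, 1/2 + \lambda_N w) = 0$ into
\be
w^m = \epsilon_N + o(1), \qquad \epsilon_N \coloneqq \sgn\!\lb\frac{a_{p_{N+1}}}{c_m}\rb \in \{\pm 1\},
\ee
where the $o(1)$ absorbs the higher Taylor terms (each of relative order $\lambda_N^{2k}$) and the normalized error $E_N(\lambda_N w)/(c_m \lambda_N^m)$. The $m$ solutions of $w^m = +1$, multiplied by the scaling factor $|p_{N+1}/a_{p_{N+1}} \cdot e^{2\pi p_{N+1}/\sqrt C}|^{1/m} \cdot \lambda_N$ entering the definition of $A_{\Lambda,N}$, give exactly the vertices of $\mathcal{S}^{\mrm{even},+}_m$, while the solutions of $w^m = -1$ give $\mathcal{S}^{\mrm{even},-}_m$. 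The case $m$ odd is handled identically after factoring out the forced zero at $u = 0$, reducing to an equation of degree $m-1$ whose roots, together with $0$, form $\mathcal{S}^{\mrm{odd},\pm}_m$.

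The approximation above is valid only when $F_N$ genuinely dominates the normalized error. Inspecting \eqref{eq13intro}, this translates into requiring $|a_{p_{N+1}}|$ not too small and the gap $p_{N+2} - p_{N+1}$ not too short. I would therefore define
\be
\mathfrak{B}_\Lambda \coloneqq \lcb p_N :\; |a_{p_{N+1}}| \geq \delta_N \text{ and } p_{N+2} - p_{N+1} \geq \eta \log p_{N+1} \rcb,
\ee
for a slowly decaying $\delta_N \downarrow 0$ and a fixed $\eta = \eta(C,P) > 0$ chosen large enough to suppress the second error term. The hypothesis that $E$ has multiplicative reduction at some place forces $E$ to be non-CM (CM elliptic curves over $\mathbb{Q}$ have potentially good reduction at every prime), so the Sato--Tate law of Barnet-Lamb, Geraghty, Harris, and Taylor applies and yields density close to $1$ for $\{p : |a_p| \geq \delta\}$ as $\delta \downarrow 0$. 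Combined with standard prime-gap density results ensuring that the set of $p_n$ with $p_{n+1} - p_n \geq \eta \log p_n$ has density close to $1$ for small $\eta$, this makes $\mathfrak{B}_\Lambda$ of full density. The splitting $\mathfrak{B}_\Lambda = \mathfrak{B}^+_\Lambda \sqcup \mathfrak{B}^-_\Lambda$ by the sign $\epsilon_N$ then inherits density $1/2$ on each side by the symmetry of the Sato--Tate measure about $0$.

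Finally, the rescaled functions $f_N(w) \coloneqq \Lambda_N(E, 1/2 + \lambda_N w)/(c_m \lambda_N^m)$ converge to $w^m \mp 1$ uniformly on compact subsets of $\mathbb{C}$ along $p_N \in \mathfrak{B}^\pm_\Lambda \to \infty$, and Hurwitz's theorem transfers this convergence to the zero sets, giving $A_{\Lambda,N} \to \mathcal{S}^\pm_m$. The main obstacle is the density claim in the previous paragraph: verifying quantitatively that the error bound \eqref{eq13intro} is sub-leading on a density-$1$ set of primes requires careful combination of the Sato--Tate distribution of the $a_p$'s with fine information about small prime gaps. A secondary technical point is the uniform control of the higher-order Taylor terms $c_{m+2k} u^{m+2k}$ inside the rescaling disk, which is however routine since $\Lambda(E, \cdot)$ is entire and the disk shrinks with $\lambda_N$.
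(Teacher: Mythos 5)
Your overall strategy — combining the asymptotic formula of Theorem \ref{theoremintro} with the Taylor expansion, rescaling around $s=1/2$, and then invoking Hurwitz (the paper uses Rouch\'e, but these are interchangeable here) together with Sato--Tate and prime-gap statistics — is exactly the route the paper takes. You also correctly observed that multiplicative reduction forces $E$ to be non-CM, unlocking the Barnet-Lamb--Geraghty--Harris--Taylor theorem unconditionally, something the paper uses but does not flag.

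The genuine gap is in the definition of $\mathfrak{B}_\Lambda$ and the accompanying density argument. You impose $|a_{p_{N+1}}| \geq \delta_N$ with $\delta_N \downarrow 0$ and $p_{N+2}-p_{N+1} \geq \eta\log p_{N+1}$. Plugging these into the requirement that the $B_2$-term be subdominant to the main term, you need
\be
\frac{|a_{p_{N+1}}|\, p_{N+2}^{1/2}}{p_{N+1}}\, e^{2\pi(p_{N+2}-p_{N+1})/\sqrt{C}} \gg 1,
\ee
and with your conditions this amounts to $\delta_N\, p_{N+1}^{\,2\pi\eta/\sqrt{C}-1/2} \gg 1$, forcing $\eta > \sqrt{C}/(4\pi)$. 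For $\eta$ of that size the set $\{p_n : p_{n+1}-p_n \geq \eta\log p_n\}$ is not known unconditionally to have positive density (for small $\eta$, yes, by sieve bounds on the count of small gaps; for large $\eta$, no). Moreover your appeal to Sato--Tate is not consistent with your own set: the Sato--Tate measure concerns the normalized quantity $a_p/(2\sqrt p)$, and the threshold $|a_p| \geq \delta_N$ with $\delta_N \to 0$ is asymptotically equivalent to $a_p \neq 0$, which does not help with the inequality above. The paper's fix is to impose $|a_{p_{N+1}}| \geq \delta\sqrt{p_{N+1}}$ with $\delta$ \emph{fixed}; this is the correct Sato--Tate-scaled condition, has density $1/2 - \delta/\pi + O(\delta^2)$ on each sign, and crucially it collapses the needed gap requirement to a \emph{bounded} constant depending only on $\delta$ and $C$. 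The complement (primes with gap below a fixed constant) is covered by the standard sieve bound $\#\{p_N \leq M : p_{N+2}-p_{N+1}=d\} = O_d(M/\log^2 M)$, giving density $1-O(1/\log M)$. Only at the very end does one let $\delta \to 0$ slowly to upgrade the densities $1/2 - O(\delta)$ to exactly $1/2$. This is the key idea you are missing: trade a strong $a_p$ lower bound for a weak (bounded) gap condition, rather than the other way around.
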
}

\extratext{When the curve has additive reduction at all the bad places, Theorem \ref{thm3ngon} holds conditional on the Sato-Tate conjecture.}

\extratext{In Section \ref{sec2prelim} we review notation and introduce some useful lemmas. In Section \ref{sec3Matiya} we present in detail Matiyasevich's construction, adapted to the case of $L$-functions associated to elliptic curves over~$\mathbb{Q}$. In Section \ref{sec4someproofs} we give the proof of Theorem \ref{theoremintro} that our construction indeed produces approximations for the given $L$-functions. Finally, in Section \ref{sec5proofsforpolygons} we give the proof of Theorem \ref{thm3ngon}.}

\begin{figure}[htp]
\centering
\includegraphics[scale=0.7]{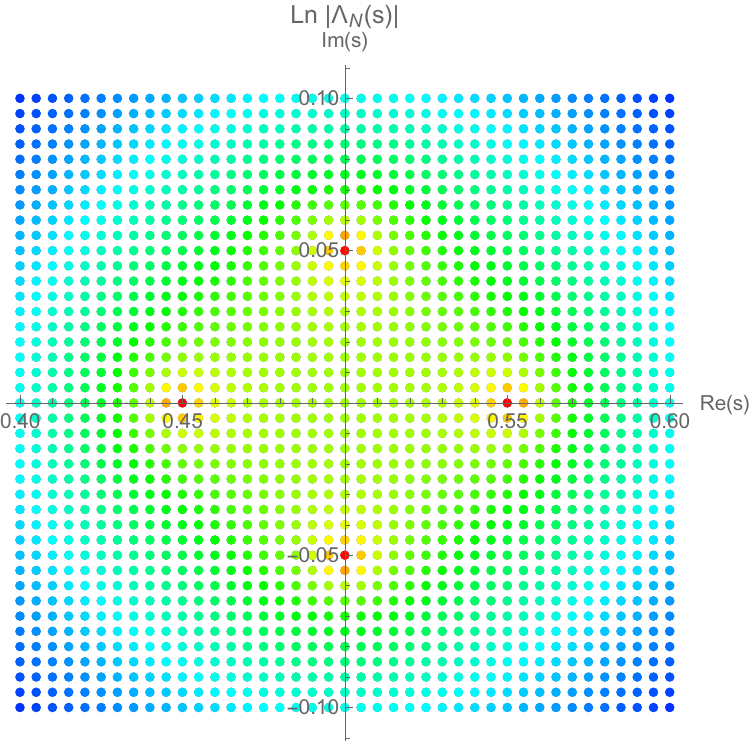}
\includegraphics[scale=0.7]{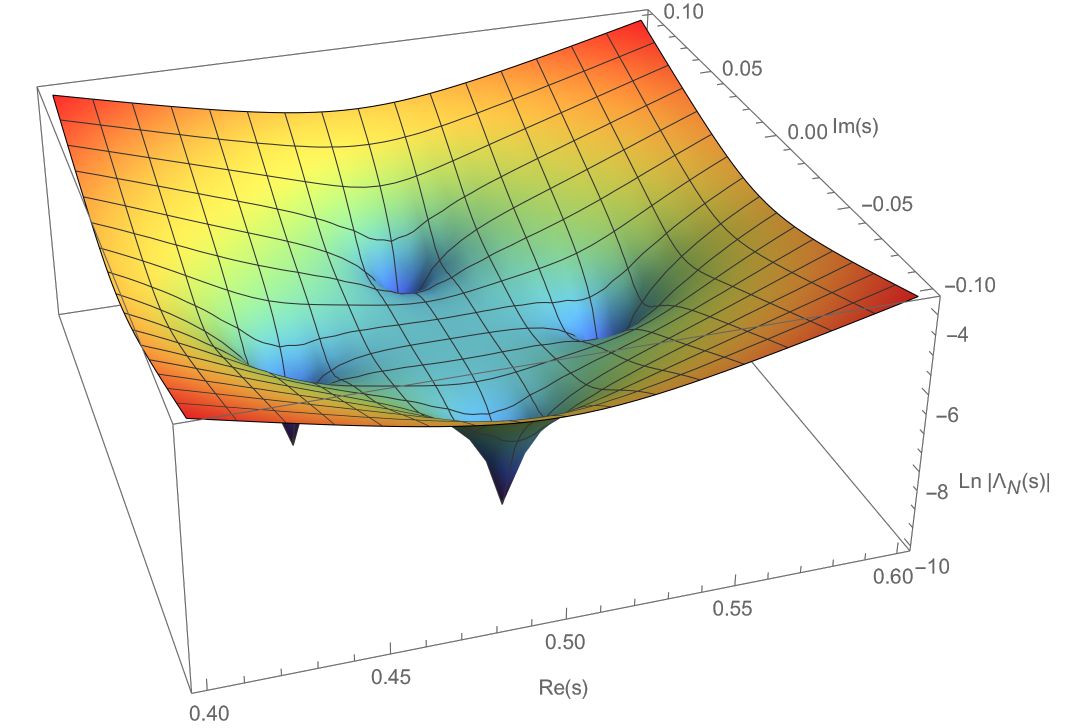}
\caption{The closest 4 zeros of the approximation $\Lambda_N(E,s)$ to the central point, for all primes up to (and including) $p_{N=78}=397$. This approximation is for the $L$-function associated to elliptic curve $y^2+xy=x^3-x^2-79x+289$ of conductor $C=234446$, LMFDB label 234446.a1. The zeros form an approximate square with vertices on the real axis and critical line.}
\label{fig1}
\end{figure}

\begin{figure}[htp]
\centering
\includegraphics[scale=0.7]{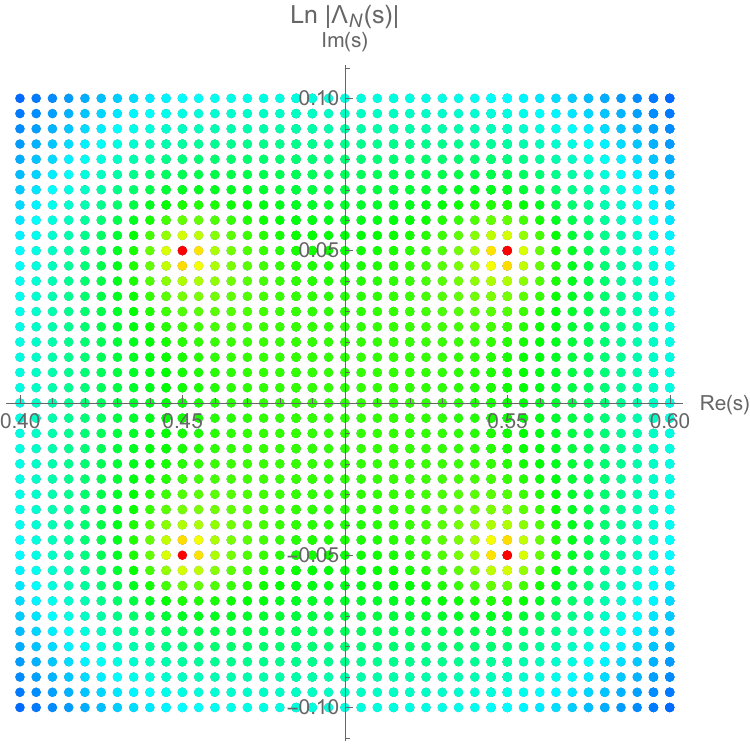}
\includegraphics[scale=0.7]{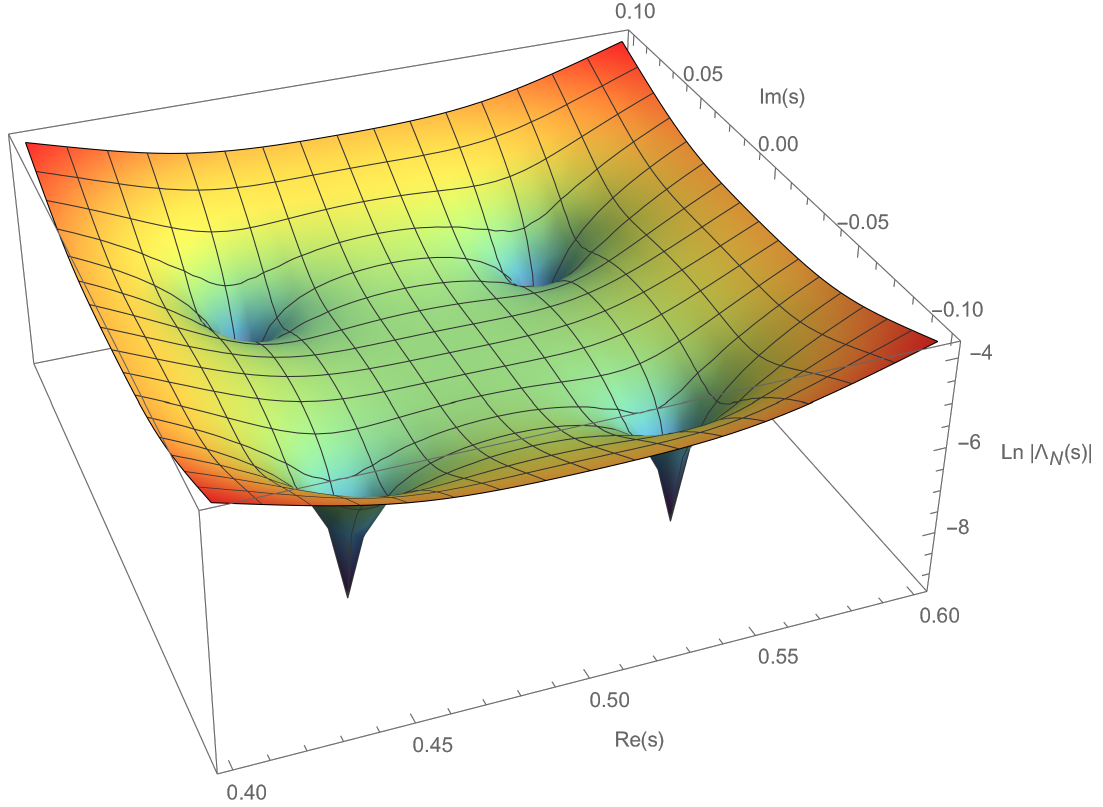}
\caption{The closest 4 zeros of the approximation $\Lambda_N(E,s)$ to the central point, for all primes up to (and including) $p_{N=79}=401$. This approximation is for the $L$-function associated to elliptic curve $y^2+xy=x^3-x^2-79x+289$ of conductor $C=234446$, LMFDB label 234446.a1. In this case the zeros form an approximate square with vertices on diagonals, signifying that $\Lambda_{N=78}(1/2)$ and $\Lambda_{N=79}(1/2)$ have opposite~signs.}
\label{fig2}
\end{figure}

\begin{figure}[htp]
\centering
\includegraphics[scale=0.7]{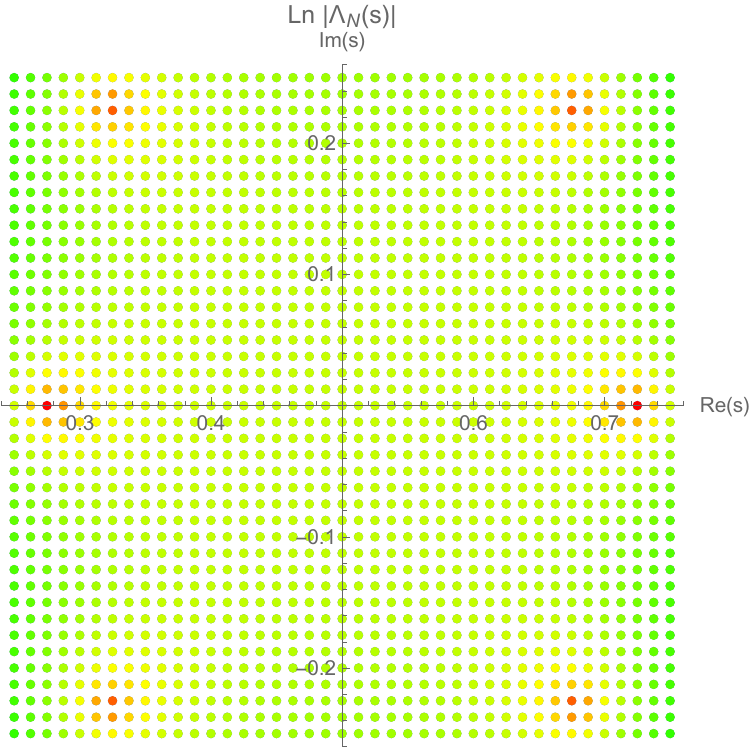}
\includegraphics[scale=0.7]{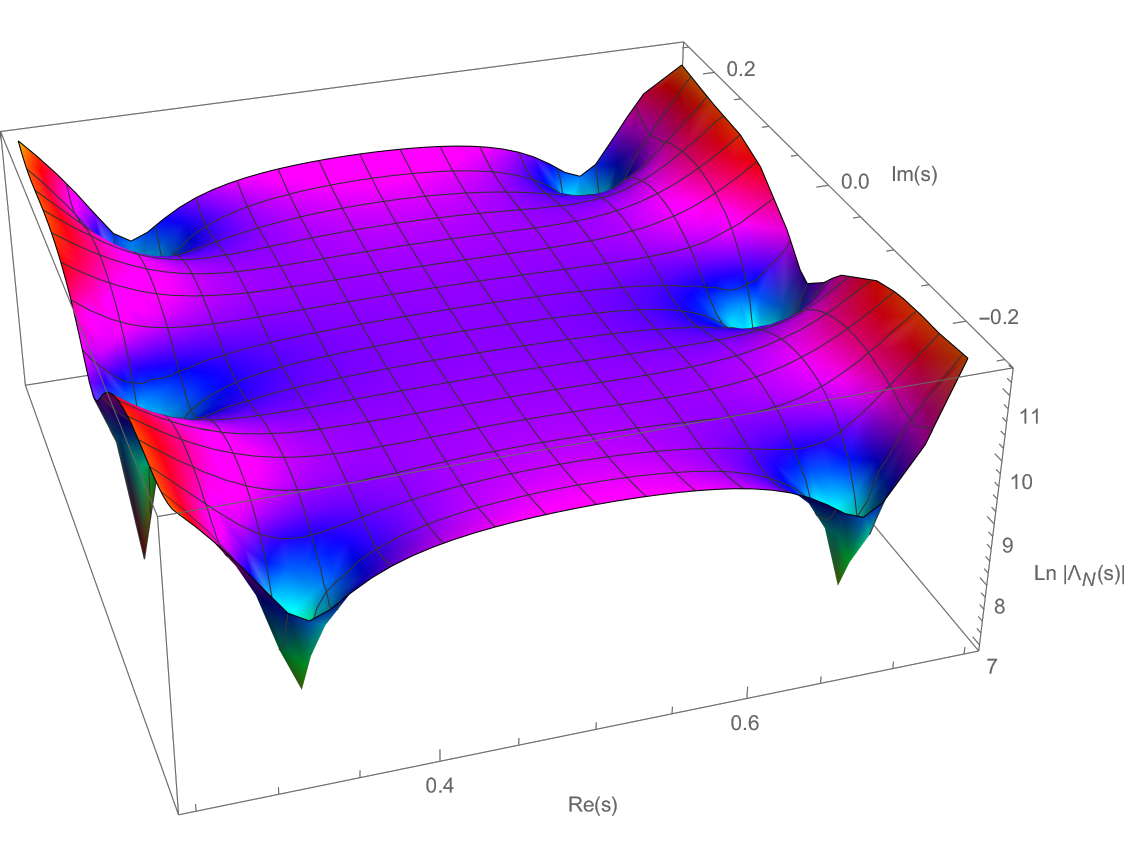}
\caption{The closest 6 zeros of the approximation $\Lambda_N(E,s)$ to the central point, for all primes up to (and including) $p_{N=320}=2129$. This approximation is for the $L$-function associated to elliptic curve $y^2-x^3-5858 x^2+111546435 x=0$ of conductor $C=26799137200956120$ \cite{PenneyPomerance}. }
\label{fig3}
\end{figure}

\begin{figure}[htp]
\centering
\includegraphics[scale=0.7]{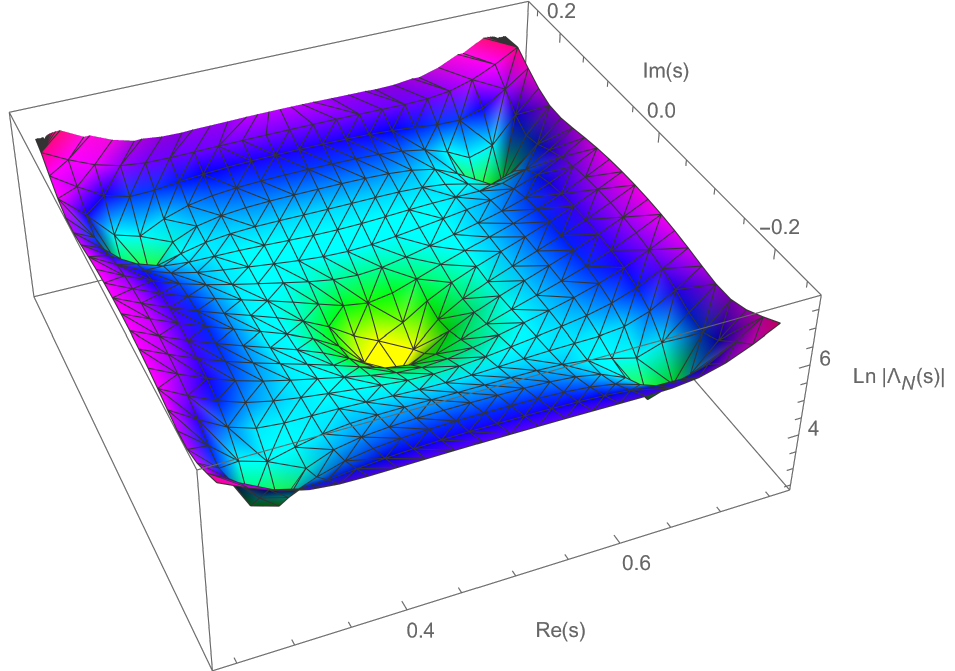}
\caption{Plot of $\ln |\Lambda_N(E,s)|$ in the central region for all primes up to (and including) $p_{N=250}=1583$, for the $L$-function associated to elliptic curve $y^2-x^3-1217 x^2-96135 x=0$, of conductor $C=421666952460$ \cite{PenneyPomerance}. The closest 5 zeros to the central point are clearly visible.}
\label{fig4}
\end{figure}

\section{Preliminary facts}
\label{sec2prelim}

An elliptic curve $E$ over $\mathbb{Q}$ is a non-singular cubic curve given by an equation
\be
y^2 = ax^3 + bx^2 + cx +d,
\ee
with $a,b,c,d \in \mathbb{Q}$,  with $a \neq 0$.  Let $p$ be a prime,  and let $E_p$ be the curve over the finite field $\mathbb{F}_p$ obtained by reducing the coefficients of $E$ modulo $p$.  We say that $E$ has good reduction at $p$ if $E_p$ is nonsingular,  and has bad reduction at $p$ otherwise.  The types of bad reduction at $p$ can be further classified into: additive (if $E_p$ has a cusp at its singular point),  split multiplicative (if the singular point of $E_p$ is a node having two tangent lines with slopes in $\mathbb{F}_p$,  rather than in an extension),  and non-split multiplicative reduction.  The conductor of $E$ is defined to be
\be
C = \prod_{p} p^{e_p},
\ee
where the exponent $e_p$ is zero if $E$ has good reduction at $p$,  one if $E$ has multiplicative reduction at $p$,  and two if $E$ has additive reduction and $p \neq 2,3$ (if $p=2$ or $p=3$,  the additive reduction case has a more complicated expression).

We define the $L$-series of an elliptic curve $E$ with conductor $C$ as the Euler product 
\be
L(E,s) = \prod_{p \nmid C} \frac{1}{1- a_p p^{-(s+1/2)} + p^{-2s}} \prod_{p \mid C} \frac{1}{1+a_p p^{-(s+1/2)}}
\ee
which converges when $\Re(s)>1$.  If $E$ has good reduction at $p$, $a_p = p+1 - n_p$,  where $n_p$ is the number of points of $E_p$.   Otherwise,  $a_p$ is zero in the case of bad additive reduction at $p$,  $-1$ in the case of split multiplicative reduction and $+1$ otherwise.

The function $L(E,s)$ admits an analytic continuation to the entire complex plane and a functional equation that relates the values at $s$ with those at $1-s$.

The critical line $\text{Re}(s) = 1/2$ of symmetry of the functional equation encodes important arithmetic information on the elliptic curve. The famous Birch and Swinnerton-Dyer conjecture posits that the analytic rank of $E$ which is defined to be the order of vanishing of $L(E, 1/2)$, equals the rank of the finitely generated abelian group of rational points of $E$.

For the classical theory on the Riemann zeta function see \cite{Titchmarsh} or \cite{Davenport}. For more on the theory of $L$-functions associated to elliptic curves the reader is referred to \cite{KowalskiIwaniec}.
 
Our notations are as follows:

\begin{center}
\begin{tabular}{l|l}
	$s$ & the complex argument\\
	$C$ & the conductor \\
	$w=(-1)^P$ & the root number \\
	$L_p(E,s)$ & the local factor at $p$ \\
	$g(s)$ & the local factor at $\infty$ \\
	$s_\star$ & complex variable running over the local factor poles \\
	$\mathfrak{S}_N$ & set of poles of all the local factors\\
	$\mathfrak{O}_N$ & set of poles of all the local factors at primes $p$\\
	$r_\star$ & residue at $s_\star$ of the local factor at $p$\\
	$R_{-1/2-l}$ & residue of $g(s)$ at negative half-integer $-1/2-l$ \\
	$p_N$ & the cutoff in the number of primes
\end{tabular}
\end{center}

The local factor at the Archimedean place in analytic convention is
\be
\label{eq21forg}
g(s) = C^\frac{s}{2}\Gamma_\mathbb{C}\lb s + \frac{1}{2}\rb,
\ee
where
\be
\label{eq22forgamma}
\Gamma_\mathbb{C}(z) \coloneqq 2 (2 \pi)^{-z} \Gamma(z),
\ee
with $\Gamma(z)$ the Euler Gamma function. \extratext{In analytic convention the critical strip is $0<\Re(s)<1$ (see the $L$-function and modular forms database \href{https://www.lmfdb.org/knowledge/show/lfunction.normalization}{LMFDB.org} for more details).}
\begin{remark}
In analytic convention, the inverse local factors at places of bad reduction are $1$, $1+p^{-s-1/2}$, or $1-p^{-s-1/2}$, depending on whether the bad reduction is additive, nonsplit multiplicative, or split multiplicative respectively.
\end{remark}

We denote the local factor at $p$ in analytic convention by $L_p(E,s)$, so that in the case of good reduction $L_p^{-1}(E,s)=1 - a_p p^{-(s+1/2)} + p^{-2s}$, and similar expressions hold at the places of bad reduction.

\begin{remark}[Hasse bound] The Hasse bound is
\be
|a_p|<2\sqrt{p}.
\ee
\end{remark}

Coefficient $a_p$ is related to the number of points $n_p$ by the relation
\be
a_p = p+1 -n_p. 
\ee

\extratext{The information in the $L$-function, according to our construction, will depend on the local $L$-factor poles. Lemmas \ref{lemma2} -- \ref{lemmafourpoles} characterize the locations and types of these poles, in terms of reduction type.}

\begin{lemma}
\label{lemma2}
In analytic convention, the poles of a local factor with good reduction are on the imaginary axis, and the poles of a local factor with bad multiplicative reduction are on the imaginary line of real part\ \ $-1/2$.
\end{lemma}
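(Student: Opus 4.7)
The plan is to treat the two reduction types separately, reducing each in turn to a simple root-location computation in the variable $u = p^{-s}$ (or $p^{-(s+1/2)}$), then translating the constraint $|u|=1$ back into a condition on $\Re(s)$.

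For the good reduction case, I would set $u = p^{-s}$, rewrite
\[
L_p^{-1}(E,s) = 1 - a_p p^{-1/2} u + u^2,
\]
and locate the poles of $L_p(E,s)$ as the zeros of this quadratic. The quadratic formula gives roots
\[
u_\pm = \tfrac{1}{2}\bigl( a_p p^{-1/2} \pm \sqrt{a_p^2/p - 4}\bigr).
\]
Here the key input is the Hasse bound $|a_p| < 2\sqrt{p}$, which forces the discriminant to be strictly negative, so $u_\pm = \tfrac12 a_p p^{-1/2} \pm \tfrac{i}{2}\sqrt{4 - a_p^2/p}$, and a direct computation gives $|u_\pm|^2 = \tfrac{a_p^2/p + (4 - a_p^2/p)}{4} = 1$. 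Since any pole $s_\star$ satisfies $|p^{-s_\star}| = 1$, we conclude $\Re(s_\star) = 0$, as claimed.

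For the bad multiplicative case, the inverse local factor $1 + a_p p^{-(s+1/2)}$ with $a_p \in \{\pm 1\}$ is an exponential polynomial in $s$, and its zeros are immediate: $p^{-(s+1/2)} = -a_p^{-1} = \mp 1$, whose modulus is $1$. Taking absolute values yields $p^{-\Re(s) - 1/2} = 1$, hence $\Re(s) = -1/2$. (For completeness I would note that the additive case is outside the lemma's scope because the inverse local factor is the constant $1$, which has no zeros and hence the local factor has no poles.)

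The argument is essentially a two-line computation in each case, so there is no serious obstacle; the only subtlety is that the good reduction statement truly requires the strict Hasse inequality to rule out real poles (a real coincidence in the discriminant), and this is automatic from the stated bound.
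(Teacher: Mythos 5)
Your proof is correct and takes essentially the same route as the paper: both reduce to the observation that, thanks to the strict Hasse inequality, the roots of the quadratic factor (in $p^{-s}$ or equivalently the normalized count $(a_p \pm i\sqrt{4p-a_p^2})/(2\sqrt p)$) have modulus $1$, and both dispose of the bad multiplicative case by noting $|p^{-(s+1/2)}|=1$ directly. The only cosmetic difference is that you substitute $u=p^{-s}$ and apply the quadratic formula, whereas the paper writes the pole locations out explicitly via logarithms before invoking the same unit-modulus computation.
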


For each prime $p$, $L_p(E,s)$ has simple poles. The inverse of the local factor with good reduction~is
\be
L^{-1}_p(E,s) = 1 - a_p p^{-(s+1/2)} + p^{-2s}.
\ee
If $a_p=0$ the result is immediate. Otherwise, the poles correspond to $L_p^{-1}(E,s)=0$, and are at 
\be
\label{s12is}
s_\star = \frac{2\pi i}{\ln p} k + \frac{1}{\ln p} \ln \lb \frac{a_p\pm i\sqrt{4p-a_p^2}}{2\sqrt{p}} \rb, \quad k\in\mathbb{Z},
\ee
with residue
\be
\label{r12is}
r_\star=\frac{1}{2 \ln p} \lb 1\mp\frac{i a_p}{\sqrt{4 p-a_p^2}} \rb.
\ee
Note that precisely for $|a_p|<2\sqrt{p}$ (from the Hasse bound) the complex number inside the logarithm has unit norm, because
\be
\label{Eq28}
\lb \frac{a_p}{2\sqrt p} \rb^2 + \lb \frac{ \sqrt{4p-a_p^2} }{2\sqrt p} \rb^2 = 1.
\ee
Thus the logarithm is purely imaginary, which proves the lemma in the case of good reduction. In the case of bad multiplicative reduction, the poles are the solutions of equation 
\be
\label{eq19Lan}
L^{-1}_p(E,s) = 1 \pm p^{-s-\frac{1}{2}} = 0,
\ee
so they will lie on the imaginary line of real part $-1/2$.

\begin{remark}
Due to the strict inequality in the Hasse bound, the poles of local factors with good reduction cannot be at $s=0$, and similarly the poles of local factors with nonsplit bad multiplicative reduction cannot be on the real axis. However, the split multiplicative local factors all have a pole on the real axis, at $s=-1/2$, which is also a pole for the local gamma factor at the Archimedean place.
\end{remark}

\begin{lemma}
\label{lemma2again}
All the poles of the local factors with bad multiplicative reduction are distinct, except for the possible poles at $s=-1/2$.
\end{lemma}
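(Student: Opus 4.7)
The plan is to explicitly parametrize the poles of each local factor with bad multiplicative reduction and reduce any coincidence between poles to the multiplicative independence of distinct primes. First I would solve the pole equation \eqref{eq19Lan} directly: writing $1 \pm p^{-s-\frac{1}{2}}=0$ as $p^{-s-\frac{1}{2}} = \mp 1$ and taking complex logarithms shows that the poles of the local factor at a prime $p$ of bad multiplicative reduction lie on the line $\Re(s)=-\frac{1}{2}$ and are indexed by an integer $c$, namely
$$s = -\frac{1}{2} + \frac{i \pi c}{\ln p},$$
with $c$ ranging over the even integers $2\mathbb{Z}$ in the split case and over the odd integers $2\mathbb{Z}+1$ in the nonsplit case. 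In particular $s=-\frac{1}{2}$ (corresponding to $c=0$) appears as a pole only in the split multiplicative case, consistent with the remark already made in the excerpt; and within a single local factor $L_p(E,s)$ the poles are automatically distinct, since distinct integers $c$ produce distinct imaginary parts.

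The core step is the cross-prime comparison. Let $p \neq q$ be two primes both of bad multiplicative reduction, and suppose $L_p$ and $L_q$ share a common pole. The parametrization above forces
$$\frac{c}{\ln p} = \frac{c'}{\ln q}$$
for admissible integers $c, c'$. I would then split into two cases: either $c=c'=0$, in which case the shared pole is precisely $s=-\frac{1}{2}$ — the exception explicitly allowed by the lemma — or at least one of $c,c'$ is nonzero, which by the displayed equation forces both to be nonzero, so that $\ln p/\ln q = c/c'$ is a nonzero rational. Clearing denominators yields a relation of the form $p^a = q^b$ with positive integers $a,b$, contradicting unique factorization since $p \neq q$.

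The one mild bookkeeping point I would want to keep in mind is that the admissibility of $c$ depends on the reduction type (even for split, odd for nonsplit), so that $c=0$ is genuinely available at a pole only when $s=-\frac{1}{2}$ is actually a pole of the corresponding $L_p$ — but this is exactly the content of the exception carved out in the lemma, so no separate argument is required. Beyond that, the entire argument reduces essentially to the irrationality of $\ln p/\ln q$ for distinct primes, and I do not anticipate any real obstacle.
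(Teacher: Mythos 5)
Your proof is correct and follows essentially the same route as the paper's: the paper's one-line argument is simply that the ratios $\ln p/\ln q$ of logarithms of distinct primes are irrational, and your parametrization of the poles together with the unique-factorization step is exactly a full expansion of that observation. The only difference is that you spell out the even/odd indexing for split versus nonsplit reduction, which the paper leaves implicit.
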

\begin{proof}
The locations of the poles are given by Eq. \eqref{eq19Lan}. Because the ratios of logarithms of prime numbers are irrational, the poles cannot coincide, except at $-1/2$.
\end{proof}

\begin{lemma}
\label{lemmafourpoles}
Two distinct local factors with good reduction can have at most four common poles.
\end{lemma}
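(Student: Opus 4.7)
The plan is to write down the poles of $L_p(E,s)$ and $L_q(E,s)$ explicitly using Eq.~\eqref{s12is}, and then exploit the irrationality of $\ln p/\ln q$ for distinct primes. Fix two distinct primes $p\neq q$ at which $E$ has good reduction. By Eq.~\eqref{Eq28} and the strict Hasse bound, the two quantities $(a_p\pm i\sqrt{4p-a_p^2})/(2\sqrt p)$ lie on the unit circle and are distinct, so I would write them as $e^{i\theta_p^{+}}$ and $e^{i\theta_p^{-}}$ with $\theta_p^{\pm}\in(-\pi,\pi]$ and $\theta_p^+\neq\theta_p^-$. The pole set of $L_p(E,s)$ then decomposes as the disjoint union
\[
\mathcal{P}_p=\mathcal{P}_p^+\sqcup\mathcal{P}_p^-,\qquad \mathcal{P}_p^\epsilon\coloneqq\left\{\frac{i(\theta_p^\epsilon+2\pi k)}{\ln p}:k\in\mathbb{Z}\right\},
\]
consisting of two arithmetic progressions on the imaginary axis, each with common difference $2\pi i/\ln p$; the analogous description holds for $q$.

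The main step is to show that for each of the four sign combinations $(\epsilon_1,\epsilon_2)\in\{+,-\}^2$, the intersection $\mathcal{P}_p^{\epsilon_1}\cap\mathcal{P}_q^{\epsilon_2}$ contains at most one point. Membership in this intersection corresponds to a pair of integers $k,l\in\mathbb{Z}$ satisfying
\[
(\theta_p^{\epsilon_1}+2\pi k)\ln q=(\theta_q^{\epsilon_2}+2\pi l)\ln p.
\]
If $(k_1,l_1)$ and $(k_2,l_2)$ are two such solutions for the same sign pair, subtraction gives $(k_1-k_2)\ln q=(l_1-l_2)\ln p$. Since $\ln p/\ln q$ is irrational (by unique factorization), this forces $k_1=k_2$ and $l_1=l_2$, so the solution is unique when it exists. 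Summing over the four sign combinations yields $|\mathcal{P}_p\cap\mathcal{P}_q|\leq 4$.

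I do not expect any serious obstacle; the argument reduces to the irrationality of $\ln p/\ln q$. The only subtlety is to verify that one has not double-counted, i.e.\ that the four intersections $\mathcal{P}_p^{\epsilon_1}\cap\mathcal{P}_q^{\epsilon_2}$ are pairwise disjoint, so that a common pole is counted at most once. This follows immediately from the disjointness of $\mathcal{P}_p^+$ and $\mathcal{P}_p^-$ established above from the strict Hasse bound $|a_p|<2\sqrt p$: a point lying in two different sign-pair intersections would force $\theta_p^+\equiv\theta_p^-\pmod{2\pi}$, contradicting $a_p\neq\pm 2\sqrt p$. Once this is checked, the bound of four is automatic.
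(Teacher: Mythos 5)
Your argument is correct and is essentially the paper's own proof: both write a common pole in terms of the explicit formula Eq.~\eqref{s12is}, reduce to four sign combinations, and use the irrationality of $\ln p/\ln q$ to show each combination contributes at most one solution. Your additional disjointness check is harmless but unnecessary for the upper bound, since the intersection $\mathcal{P}_p\cap\mathcal{P}_q$ is contained in the union of the four sign-pair intersections regardless of whether those overlap.
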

\begin{proof}
Suppose the local factors with good reduction at $p$ and $q$, $p\neq q$, have a common pole at $s_\star$,~then
\be
s_\star = \frac{2\pi i}{\ln p} k + \frac{1}{\ln p} \ln \lb \frac{a_p\pm_p i\sqrt{4p-a_p^2}}{2\sqrt{p}} \rb = \frac{2\pi i}{\ln q} l + \frac{1}{\ln q} \ln \lb \frac{a_q\pm_q i\sqrt{4q-a_q^2}}{2\sqrt{q}} \rb,
\ee
for $k,l\in\mathbb{Z}$ and one choice of signs $\pm_p$, $\pm_q$. There are four possible choices of signs total and for each choice there can be at most one pair of integers $k,l$ (because the ratio $\ln q / \ln p$ is irrational), thus the local factors can share at most four poles.
\end{proof}

For $\sigma,t\in \mathbb{R}$, $-\sigma \notin \mathbb{N}$, the gamma function obeys the identity
\be
|\Gamma\lb \sigma + it \rb| = |\Gamma(\sigma)|\prod_{k=0}^\infty \lb 1 + \frac{t^2}{(\sigma+k)^2} \rb^{-\frac{1}{2}},
\ee
which can be used to prove the following standard result \extratext{that we will need later}.

\begin{lemma}
For real $\sigma\geq 1$ and real $t$ we have
\be
\label{Stirling}
\left| \frac{\Gamma\lb \sigma+ it \rb}{\Gamma\lb \sigma \rb} \right| \ll e^{-\frac{3}{8} \min \lb |t|, \frac{t^2}{\sigma} \rb}.
\ee
\end{lemma}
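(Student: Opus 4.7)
The plan is to work directly with the product identity displayed immediately before the lemma. Taking logarithms, one has
\[
\log\!\left|\frac{\Gamma(\sigma+it)}{\Gamma(\sigma)}\right| = -\tfrac{1}{2}\sum_{k=0}^{\infty}\log\!\left(1+\frac{t^2}{(\sigma+k)^2}\right).
\]
Since each summand is a positive, decreasing function of $k$, monotone comparison with integrals gives
\[
\sum_{k=0}^{\infty}\log\!\left(1+\frac{t^2}{(\sigma+k)^2}\right) \geq \int_{0}^{\infty}\log\!\left(1+\frac{t^2}{(\sigma+x)^2}\right)dx.
\]
I would evaluate this integral in closed form: substituting $z=\sigma+x$ and integrating by parts with $U=\log(1+t^2/z^2)$, $dV=dz$, the boundary and remainder terms combine to give $2|t|\arctan(|t|/\sigma)-\sigma\log(1+t^2/\sigma^2)$. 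Writing $u:=|t|/\sigma\geq 0$ and defining $F(u):=\tfrac{1}{2}\log(1+u^2)-u\arctan u$, this integral equals $-2\sigma F(u)$, so the estimate reduces to the one-variable bound
\[
\log\!\left|\frac{\Gamma(\sigma+it)}{\Gamma(\sigma)}\right| \leq \sigma F(u).
\]

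It then remains to show that $\sigma F(u) \leq -\tfrac{3}{8}\min(|t|,t^2/\sigma)$, a pointwise inequality on $F$. First, $F(0)=0$ and $F'(u)=-\arctan u\leq 0$, so $F$ is non-positive and monotonically decreasing on $[0,\infty)$. I would then split into two regimes. If $u\leq 1$ (equivalently $|t|\leq\sigma$), set $G(u):=F(u)+\tfrac{3}{8}u^2$; then $G(0)=G'(0)=0$ and $G'(u)=-\arctan u+\tfrac{3u}{4}$, whose derivative $G''(u)=-(1+u^2)^{-1}+\tfrac{3}{4}$ vanishes only at $u=1/\sqrt{3}$ (negative before, positive after). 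Combined with the endpoint check $G'(1)=-\pi/4+3/4<0$, this forces $G'\leq 0$, hence $G\leq 0$, on $[0,1]$, yielding $\sigma F(u)\leq -\tfrac{3t^2}{8\sigma}$. If $u>1$ (equivalently $|t|>\sigma$), set $H(u):=F(u)+\tfrac{3u}{8}$; since $\arctan u\geq \pi/4 > 3/8$ on $[1,\infty)$, one has $H'(u)<0$ there, while $H(1)=\tfrac{1}{2}\log 2-\pi/4+\tfrac{3}{8}<0$, so $H\leq 0$ on $[1,\infty)$, giving $\sigma F(u)\leq -\tfrac{3|t|}{8}$.

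The main obstacle is verifying that the specific constant $3/8$ works in the inequality $F(u)\leq -\tfrac{3}{8}u^2$ on $[0,1]$: since both $F(u)$ and $F(u)+\tfrac{3}{8}u^2$ vanish to second order at $u=0$, the bound is tight at the origin, and confirming it persists on the whole interval requires the two-step derivative analysis described above. Once this calculus lemma and its large-$u$ counterpart are in place, combining the two regimes gives the claimed $\ll$ bound, in fact without any hidden multiplicative constant.
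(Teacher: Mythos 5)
Your proof is correct, and it follows exactly the route the paper implicitly prescribes: the paper states the product identity $|\Gamma(\sigma+it)|=|\Gamma(\sigma)|\prod_{k\geq0}(1+t^2/(\sigma+k)^2)^{-1/2}$ and remarks that it ``can be used to prove'' the lemma, without giving the details, and your integral comparison plus the calculus bounds on $F(u)=\tfrac12\log(1+u^2)-u\arctan u$ supply precisely those details (indeed with implied constant $1$). One minor imprecision in your commentary: at the origin, $G(u)=F(u)+\tfrac38 u^2=-\tfrac18 u^2+O(u^4)$, so the inequality is not ``tight'' there in the sense of nearly failing — the $3/8$ is in fact governed by the behavior near $u=1$, where $F(1)+\tfrac38=\tfrac12\log2-\tfrac{\pi}{4}+\tfrac38\approx-0.064$ and $G'(1)=\tfrac34-\tfrac{\pi}{4}\approx-0.035$ are the quantities that come close to zero; this does not affect the validity of your argument.
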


\section{A family of approximations for Hasse-Weil $L$-functions}
\label{sec3Matiya}

As we will show, the generalization of Matiyasevich's prescription \cite{Ma1} for the case of $L$-functions associated to elliptic curves constructs a function $\Lambda_N$ that approximates the $L$-function $\Lambda$ in the limit $N\to\infty$. Here $N$ is the number of primes included in the approximation, and when $N$ is large $\Lambda_N(E,s)$ approximates $\Lambda(E,s)$ with exponential precision in $p_{N+1}$,
\ba
\Lambda(E,s_0) - \Lambda_N(E,s_0) &=&\frac{ a_{p_{N+1}} C^\frac{1}{4}}{\pi p_{N+1}} e^{-\frac{2 \pi  p_{N+1}}{\sqrt{C}}}  \lsb 1 + (-1)^P + \frac{(2s_0-1)C^\frac{1}{2}}{4\pi p_{N+1}} \lb 1 - (-1)^P \rb + \mathcal{O}(p_{N+1}^{-2}) \rsb \nn \\
& & + \mathcal{O}\lb \frac{1}{\lb p_{N+2}\rb^{1-(-1)^P/2}} e^{-\frac{2\pi p_{N+2}}{\sqrt{C}}} \rb
\ea
(see Theorem \ref{thm2approx} for the details). $\Lambda_N$ is a sum of two terms,
\be
\label{eq23Matiyasevich}
\Lambda_{N}(E,s) \coloneqq \Lambda^\mrm{ingoing}_{N}(E,s) + (-1)^P \Lambda^\mrm{ingoing}_{N}(E,1-s),
\ee
where the root number $(-1)^P$ is the sign in the $\Lambda(E,s)$ functional equation, which reads
\be
\Lambda(E,s) = (-1)^P \Lambda(E,1-s).
\ee

In order to construct the function $\Lambda^\mrm{ingoing}_{N}(E,s)$ we must first construct a finite Euler product $\Lambda_N^\mrm{Euler}(E,s)$, that is
\be
\label{eq114}
\Lambda_N^\mrm{Euler}(E,s) \coloneqq g(s) \prod_{p=2}^{p_N} L_p(E,s),
\ee
where the product is understood to run over primes. The right-hand side of Eq. \eqref{eq114} is a meromorphic function with an infinite number of poles in the complex plane, at $s=s_\star$ with $L^{-1}_p(s_\star)=0$ for all primes $2\leq p \leq p_N$, and at $s=-l-1/2$, $l\in\mathbb{N}$, corresponding to the poles of $g(s)$. Note that if the curve has split multiplicative reduction then $s=-1/2$ is a pole both for $g(s)$ and the split multiplicative local factors, and so $\Lambda_N^\mrm{Euler}(E,s)$ will have a higher order pole at $s=-1/2$.

As explained in Lemmas \ref{lemma2}, \ref{lemma2again}, any pole of a bad reduction local factor cannot coincide with any other poles, except possibly at $s=-1/2$. In the rest of the paper we will remain agnostic whether poles of different good reduction local factors can coincide. For each good reduction local factor pole $s_\star$ we will assume an order $k_\star\geq 1 $ of the pole in $\Lambda_N^\mrm{Euler}(E,s)$. Note that, because we are considering a finite number of primes $p\leq p_N$, $k_\star$ must be finite, from Lemma \ref{lemmafourpoles}.

The next step is to construct the principal part $\Lambda^\mrm{pp}_N(E,s)$ of $\Lambda_N^\mrm{Euler}(E,s)$. Consider the infinite set $\mathfrak{S}_N$ of all the local factor poles,
\be
\mathfrak{S}_N \coloneqq \lcb s_\star | L_{p\leq p_N}^{-1}(E,s_\star) = 0 \rcb \cup \lcb s_\star | g^{-1}(s_\star) =0 \rcb.
\ee
The Laurent series of $\Lambda_N^\mrm{Euler}(E,s)$ around each $s_\star\in\mathfrak{S}_N$,
\be
\label{eq116Laurent}
\Lambda_N^\mrm{Euler}(E,s) = \sum_{k=-k_\star}^\infty \rho^{(k)}_\star\lb s-s_\star\rb^k,
\ee 
has a principal part
\be
\Lambda_N^{\mrm{pp},s_\star}(E,s) \coloneqq \sum_{k=-k_\star}^{-1} \rho^{(k)}_\star\lb s-s_\star\rb^k.
\ee
The principal part $\Lambda^\mrm{pp}_N(E,s)$ of $\Lambda_N^\mrm{Euler}(E,s)$ is defined as the sum
\be
\label{eq118ppdefn}
\Lambda^\mrm{pp}_N(E,s) \coloneqq \sum_{s_\star \in \mathfrak{S}_N} \Lambda_N^{\mrm{pp},s_\star}(E,s),
\ee
and $\Lambda^\mrm{ingoing}_{N}(E,s)$ is defined as
\be
\label{eq119Lambdain}
\Lambda^\mrm{ingoing}_{N}(E,s) \coloneqq \Lambda^\mrm{Euler}_{N}(E,s) - \Lambda^\mrm{pp}_{N}(E,s).
\ee

The principal part $\Lambda_N^\mrm{pp}(E,s)$ in Eq. \eqref{eq118ppdefn} is well-defined, meaning that the sum over $s_\star \in \mathfrak{S}_N$ converges, as we explain in Remarks \ref{remconv1}, \ref{remconv2}.

\begin{remark}
\label{remconv1}
Let
\be
\mathfrak{O}_N \coloneqq \mathfrak{S}_N-\{-l-1/2\,|\,l\in\mathbb{N}\}
\ee
be the set of local factor poles not at the negative half-integers. The sum in Eq. \eqref{eq118ppdefn} over $s_\star\in \mathfrak{O}_N \subset \mathfrak{S}_N$ converges, because the factor of $g(s_\star)$ decays rapidly at large absolute value of $\Im\lb s_\star \rb$. 
\end{remark}

\begin{remark}
\label{remconv2}
Let $R_{-l-1/2}$ be the residue of $g(s)$ at the negative half-integer $-l-1/2$, $l\in\mathbb{N}$. We have that $R_{-1/2}=2/C^{1/4}$, and from the identity $\Gamma(z+1)=z\Gamma(z)$ it follows that
\be
R_{-(l+1)-1/2} = R_{-l-1/2}\frac{-2\pi}{\sqrt{C}(l+1)},
\ee
so that the residues at the negative half-integers decay rapidly in the real negative direction. Thus the sum in Eq. \eqref{eq118ppdefn} over $s_\star\in \{-l-1/2\,|\,l\in\mathbb{N}\} \subset \mathfrak{S}_N$ converges.
\end{remark}

\begin{remark}
$\Lambda^\mrm{ingoing}_{N}(E,s)$ in Eq. \eqref{eq119Lambdain} and $\Lambda_{N}(E,s)$ in Eq. \eqref{eq23Matiyasevich} are entire.
\end{remark}

\begin{example}
To illustrate this prescription on an example, consider an $L$-function arising from an elliptic curve with only bad additive reduction. Furthermore, suppose that all orders of the good place local factor poles are $k_\star=1$. Then the prescription reads
\ba
\label{eq24Matiyasevich}
\Lambda^\mrm{ingoing}_{N}(E,s) &=& g(s) \prod_{p=2}^{p_N} L_p(E,s)- \sum_{s_{\star}\in \mathfrak{O}_N} g(s_\star) \prod_{\substack{p=2\\p\,\nmid\, C\\L^{-1}_p(s_\star)\neq 0}}^{p_N} L_p(E,s_\star) \frac{r_\star}{s-s_\star}\nn\\
&-& \sum_{l=0}^{\infty} \prod_{\substack{p=2\\p\,\nmid\, C}}^{p_N} L_p(E, -l-1/2) \frac{R_{-l-1/2}}{s+l+1/2}, \nn
\ea
with $s_\star$ and $r_\star$ given by Eqs. \eqref{s12is} and \eqref{r12is} respectively.
\end{example}

\section{The effectiveness and error of the approximations}
\label{sec4someproofs}

We now prove that our family approximates the degree $2$ $L$-function $\Lambda(E,s)$. \extratext{Our argument will involve integrating along certain contours in the complex plane that, for an approximation $\Lambda_N(E,s)$, are chosen to avoid in a controlled manner all the local factor poles in $\Lambda_N^\mrm{Euler}(E,s)$. These contours are introduced in Definition \ref{sparsedef}. Lemmas \ref{shortlemma1p5ish}, \ref{lemma1p5bound} characterize these contours and obtain upper bounds for $\Lambda_N^\mrm{pp}(E,s)$ for $s$ on the contours.}

\begin{defn}
\label{sparsedef}
A closed contour $\mathcal{C}$ in the complex plane is sparse w.r.t. $\mathfrak{S}_{N}$ if for $s\in\mathcal{C}$ we have
\be
\label{neweq215}
\min_{p\leq p_N} \min_{s_\star \in \mathfrak{O}_{N}} \left| s - s_\star \right| \gg \frac{1}{p_N}
\ee
and
\be
\label{neweq216}
\min_{l\in\mathbb{N}} |s+l +\frac{1}{2}|\gg 1.
\ee
\end{defn}

\begin{lemma}
\label{shortlemma1p5ish}
There exist arbitrarily large rectangular contours $\mathcal{C}$ (in the sense that the coordinates of the $4$ vertices can be arbitrarily large in the real and imaginary directions) that are sparse w.r.t.~$\mathfrak{S}_{N}$.
\end{lemma}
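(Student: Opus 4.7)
The plan is to construct $\mathcal{C}$ as the boundary of a rectangle with vertices $(\pm M,\pm T)$, where $M$ is a large positive integer and $T$ is a real number chosen carefully to avoid an exceptional set. The two vertical sides will be essentially free, while the two horizontal sides require a density argument.

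First I would handle the vertical sides. By Lemma~\ref{lemma2}, every $s_\star\in\mathfrak{O}_N$ satisfies $\Re(s_\star)\in\{0,-1/2\}$, so on $\Re(s)=\pm M$ one immediately has $|s-s_\star|\ge M-\tfrac{1}{2}$, which is $\gg 1 \gg 1/p_N$ once $M\ge 1$. Taking $M$ to be an integer also guarantees $|s+l+\tfrac{1}{2}|\ge \tfrac{1}{2}$ for every $l\in\mathbb{N}$, verifying \eqref{neweq216}. Thus both sparsity conditions hold on the two vertical sides regardless of $T$.

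For the horizontal sides $\Im(s)=\pm T$, condition \eqref{neweq216} is automatic once $T\ge 1$, since the negative half-integers are real. Condition \eqref{neweq215} needs $T$ to be at distance $\ge c/p_N$ from $\Im(s_\star)$ for every $s_\star\in\mathfrak{O}_N$, because for $s=\sigma+iT$ one has $|s-s_\star|\ge|T-\Im(s_\star)|$. I would then count the pole imaginary parts falling in a unit interval: by Eq.~\eqref{s12is}, each good-reduction prime $p\le p_N$ contributes two arithmetic progressions of common difference $2\pi/\ln p$, hence at most $\ln p /\pi + O(1)$ values per unit interval; summing via the prime number theorem, and noting that the finitely many bad multiplicative primes contribute $O(1)$, yields a total count of $O(p_N)$ per unit interval.

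Finally, a pigeonhole/measure argument: the union of the $c/p_N$-neighborhoods of these imaginary parts has Lebesgue measure $O(c)$ in any unit interval, which can be made strictly less than $1$ by choosing $c$ small enough (and adopting this $c$ as the implied constant in \eqref{neweq215}). Hence every unit interval of the real line contains an admissible $T$, and $T$ can be taken arbitrarily large; pairing it with any sufficiently large integer $M$ produces arbitrarily large rectangles with all four sides sparse. The main obstacle is the density count itself: the estimate $\sum_{p\le p_N}\log p \sim p_N$ from the prime number theorem exactly matches the $1/p_N$ spacing we can afford on the horizontal sides, and without it the pigeonhole step would fail.
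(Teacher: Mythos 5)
Your proof is correct and follows essentially the same route as the paper's: use Lemma~\ref{lemma2} to locate all finite-place poles on $\Re(s)\in\{0,-1/2\}$ so that the vertical sides cause no trouble, then count the pole imaginary parts in a unit interval as $\sum_{p\le p_N}O(\ln p)=O(p_N)$ via the prime number theorem, and choose the horizontal side level $T$ by pigeonhole. Your measure-theoretic phrasing of the pigeonhole step and your explicit treatment of the vertical sides are slightly more detailed than the paper's, but the underlying argument is the same.
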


\begin{proof}
To satisfy Eq. \eqref{neweq216}, it suffices to pick the vertical part of the contour to pass through the middle point between two poles of the local gamma factor. For Eq.~\eqref{neweq215}, note that on any vertical interval of length $1$ on the imaginary axis, from the prime number theorem there are $\mathcal{O}\lb p_N/\ln p_N \rb$ prime numbers $\leq p_N$. For prime $p$ the poles are spaced $\mathcal{O}\lb1/\ln p\rb$ apart, so there will be $\mathcal{O}\lb \ln p \rb$ poles in an interval of length $1$, so in such an interval there will be at most $\mathcal{O}\lb p_N \rb$ poles total. Then, by the pigeonhole principle, it is possible to draw the horizontal part of the contour so that Eq. \eqref{neweq215} holds.
\end{proof}

\begin{lemma}
\label{lemma1p5bound}
With the notations above, uniformly for all $s\in\mathbb{C}$ on a sparse contour wrt. $\mathfrak{S}_N$, we have
\be
\left| \Lambda^\mrm{pp}_{N}(E,s) \right| \ll \frac{1}{1+|s|},
\ee
where the constant implied by the $\ll$ symbol depends on the $L$-function and $N$ only.  
\end{lemma}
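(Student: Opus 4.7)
The plan is to split $\Lambda_N^\mrm{pp}(E,s)$ according to the source of the poles and estimate each piece separately. I write
\[
\Lambda_N^\mrm{pp}(E,s) = \Sigma_1(s) + \Sigma_2(s),
\]
where $\Sigma_1$ collects the principal parts at the finite-place local-factor poles $s_\star\in\mathfrak{O}_N$ and $\Sigma_2$ collects those at the negative half-integer poles of $g$. For each piece I will prove the bound $\ll 1/(1+|s|)$ with implied constant depending on $L$ and $N$ only, and then add.

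For $\Sigma_2$, the residue of $\Lambda_N^\mrm{Euler}$ at $-l-1/2$ equals $R_{-l-1/2}\prod_{p\leq p_N}L_p(E,-l-1/2)$; by Remark~\ref{remconv2}, $|R_{-l-1/2}|$ decays factorially in $l$, and the finite Euler product is bounded in $l$. Equation~\eqref{neweq216} gives $|s+l+1/2|\gg 1$ on any sparse contour, so for bounded $|s|$ we immediately obtain $|\Sigma_2|=O(1)\ll 1/(1+|s|)$. For large $|s|$, I split the series at $l_0=\lfloor |s|/2\rfloor$: when $l\leq l_0$ one has $|s+l+1/2|\gg |s|$, so the head contributes $\ll 1/|s|$; when $l>l_0$, the factorial decay of $R_{-l-1/2}$ makes the tail exponentially small in $|s|$.

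For $\Sigma_1$, Remark~\ref{remconv1}---whose argument rests on the super-polynomial decay of $|g(s_\star)|$ in $|\Im(s_\star)|$ supplied by Stirling's estimate~\eqref{Stirling}, combined with the bound $|r_\star|\ll 1/\ln p$ from~\eqref{r12is} and the uniform boundedness of the remaining finite Euler product---guarantees that
\[
M_N \coloneqq \sum_{s_\star\in\mathfrak{O}_N}\sum_{k=1}^{k_\star}|\rho_\star^{(-k)}|
\]
is finite and depends only on $L$ and $N$. I then split the $s_\star$-sum into near poles ($|s_\star|\leq |s|/2$) and far poles ($|s_\star|>|s|/2$). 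Far poles satisfy $|\Im(s_\star)|\geq |s|/2-1/2$, and refining the Stirling estimate that produced $M_N$ shows their aggregate contribution is exponentially small in $|s|$. For near poles, when $|s|$ is bounded I invoke the sparse condition~\eqref{neweq215} to conclude $1/|s-s_\star|^k\ll p_N^k$, yielding a bound $\ll p_N^{k_\mrm{max}} M_N$ that is absorbed into the $N$-dependent constant; when $|s|$ is large, $|s-s_\star|\geq |s|/2$ produces the desired $\ll M_N/|s|$ bound.

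The main obstacle I anticipate is the bookkeeping for the higher-order poles with $k_\star\geq 2$, arising from the possible coincidence of the pole of $g$ at $s=-1/2$ with the inverse split-multiplicative local factors, and from the at-most-fourfold coincidences of good-reduction poles permitted by Lemma~\ref{lemmafourpoles}. In each such case $k_\star$ is bounded by a constant depending only on the curve, so the principal part at such $s_\star$ contributes finitely many additional terms of the form $(s-s_\star)^{-k}$ with $2\leq k\leq k_\star$, each decaying in $|s|$ at least as fast as the leading simple-pole term; the same near/far splitting therefore carries through with only notational changes.
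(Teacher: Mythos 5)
Your proof is correct and follows essentially the same strategy as the paper: exploit the rapid (Stirling) decay of $|g(s_\star)|$ in $|\Im(s_\star)|$ and the factorial decay of the residues of $g$ at the negative half-integers, split the pole sum into ``near'' and ``far'' parts, use $|s-s_\star|\gg |s|$ for near poles when $|s|$ is large, and invoke the two sparse-contour conditions \eqref{neweq215}, \eqref{neweq216} to keep the denominators under control in the remaining ranges. The only real difference is organizational: you decompose by pole type (finite-place poles vs.\ the negative half-integer poles of $g$) before splitting near/far, whereas the paper bands poles by imaginary-part intervals $L\leq \Im(s_\star)<L+1$ and then distinguishes the ranges $L\leq L_0$, $L_0<L\leq\lceil|s|/2\rceil$, $\lceil|s|/2\rceil<L<\lceil 2|s|\rceil$, and $L\geq\lceil 2|s|\rceil$; your separation makes the handling of the Archimedean poles running off along the negative real axis somewhat more explicit, but the estimates are the same.
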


\begin{proof}

From Eqs. \eqref{eq21forg}, \eqref{eq22forgamma} we have
\be
g(s) = \frac{2^{\frac{3}{2}-s} \pi^{-s-\frac{1}{2}} C^{s/2}}{1 + 2s} \Gamma\left(s+\frac{3}{2}\right),
\ee
so that from Eq. \eqref{Stirling}, for $s$ of large imaginary part on the imaginary axis, we have
\be
\label{eq219}
|g(s)| \leq C_1 e^{-C_2 \Im\lb s \rb},
\ee
for some constants $C_{1,2}>0$. Consider now the poles at $s_\star$ with $L\leq \Im(s_\star)<L+1$ for nonnegative integer $L$, Eq. \eqref{eq219} implies that
\be
\label{eq128poles}
\max_{L\leq s_{\star} < L+1} \max_{1\leq k \leq k_\star} \left| \rho^{(-k)}_\star\right| \leq  C_{N} e^{-C_0 L},
\ee
where $k_\star$ is the order of the pole at $s_\star$ and $\rho_\star^{(-k)}$ are the coefficients in the Laurent expansion \eqref{eq116Laurent}. This inequality holds due to the factor of $g(s_\star)$ in each $\rho^{(-k)}_\star$. Here the constant $C_0$ depends only on the given $L$-function, and the constant $C_{N}$ depends only on $N$ and the $L$-function. We remark that there exists an $L_0$ depending only on $N$ and the $L$-function, such that for all $L\geq L_0$ all the poles in Eq. \eqref{eq128poles} will be simple, by Lemma \ref{lemmafourpoles} and the fact that the poles in the local factors not at $s=-1/2$ are simple. It follows that, for all $L\geq L_0$, Eq. \eqref{eq128poles} reduces to
\be
\max_{\substack{p_j\leq p_N\\ p_j \text{\, not\, bad\, additive}}}\max_{\substack{L\leq s_{\star} < L+1\\ L_{p_j}(s_\star) = 0 }} \Bigg| g(s_\star) r'_\star \Bigg( \prod_{\substack{p=2\\p\neq p_j\\p\,|\, C}}^{p_N} \frac{1}{1 - a^\mrm{bad}_p p^{-s_\star-1/2}} \Bigg) \Bigg( \prod_{\substack{p=2\\p\neq p_j\\p\,\nmid\, C}}^{p_N} \frac{1}{1 - a_p p^{-(s_\star+1/2)} + p^{-2s_\star}} \Bigg) \Bigg| \leq  C_{p_N} e^{-C_0 L},
\ee
where $a_p^\mrm{bad}\in \{-1,0,1\}$, depending on the type of reduction, and
\be
r'_\star=
\begin{cases}
r_\star \text{\quad if\ } p_j \nmid C, \\
\frac{1}{\ln p_j} \text{\ if\ } p_j \text{\ is\ bad\ multiplicative}
\end{cases}.
\ee

We now consider an $|s|> 2L_0$, and split into four cases,
\ba
L \begin{cases}
\leq L_0 \\
\in \lb L_0\ ,\ceil{\frac{|s|}{2}}\rsb \\
\in \lb \ceil{\frac{|s|}{2}}, \ceil{2|s|} \rb \\
\geq \ceil{2|s|}
\end{cases}.
\ea

For $L\leq L_0$, we have that
\be
\label{eqboundpart0}
\sum_{L=1}^{L_0} \sum_{L\leq s_{\star} < L+1} \sum_{1\leq k \leq k_\star} \frac{\left| \rho^{(-k)}_\star\right|}{|s-s_\star|^k} \leq  \frac{C^{(0)}_{N}}{1+|s|}.
\ee

For $L \in \lb L_0\ ,\ceil{\frac{|s|}{2}}\rsb$, we have that 
\ba
\label{eqboundpart1}
\sum_{L=L_0+1}^{\ceil{\frac{|s|}{2}}}\sum_{\substack{p_j\leq p_N\\ p_j \text{\, not\, bad\, additive}}}\sum_{\substack{L\leq s_{\star} < L+1\\ L_{p_j}(s_\star) = 0 }} \Bigg| g(s_\star) r'_\star \Bigg( \prod_{\substack{p=2\\p\neq p_j\\p\,|\, C}}^{p_N} \frac{1}{1 - a^\mrm{bad}_p p^{-s_\star-1/2}} \Bigg) \times \\ \times \Bigg( \prod_{\substack{p=2\\p\neq p_j\\p\,\nmid\, C}}^{p_N} \frac{1}{1 - a_p p^{-(s_\star+1/2)} + p^{-2s_\star}} \Bigg) \Bigg| \frac{1}{|s-s_\star|} \leq \frac{C^{(1)}_{N}}{1+|s|}. \nn
\ea

For $L\geq \ceil{2|s|}$, we have that
\be
\label{eqboundpart2}
\sum_{L=\ceil{2|s|}}^\infty e^{-C_0L} = \frac{e^{\lb 1 - \ceil{2|s|} \rb C_0}}{e^{C_0}-1} \leq \frac{C^{(2)}_{N}}{1+|s|}.
\ee

In Eqs. \eqref{eqboundpart0} -- \eqref{eqboundpart2}, $C^{(0)}_{N}$ $C^{(1)}_{N}$, $C^{(2)}_{N}$ are constants that depend on $N$ and on the $L$-function.

For $\ceil{|s|/2} < L < \ceil{2|s|}$, we have that $\sum_{L=\ceil{\frac{|s|}{2}}}^{\ceil{2|s|}} e^{-C_0L}$ decays exponentially when $|s|$ is large, and the sparseness condition ensures that there can be no large contribution in $s$ coming from the factors of $|s-s_\star|$ in the denominator. We have thus obtained
\be
\label{eq2p22decayatinfty}
\left| \Lambda^\mrm{pp}_{N}(E,s) \right| \leq \frac{C'_{N}}{1+|s|}
\ee
uniformly for $s$ on the sparse contours w.r.t. $\mathfrak{S}_{N}$, where $C'_{N}$ is a constant that depends on $N$, the $L$-function being considered, as well as on the implicit constants in Eqs. \eqref{neweq215}, \eqref{neweq216}.
\end{proof}

\extratext{We now need to estimate the difference between the $L$-function $\Lambda(E,s)$ and the approximation $\Lambda_N(E,s)$. Theorem \eqref{thisisatheorem3} expresses this difference as an integral of $\Lambda(E,s) - \Lambda^\mrm{Euler}_{N}(E,s)$ on a vertical line to the right of the critical strip. This integral presentation will allow us to obtain asymptotic formulas for the difference $\Lambda(E,s_0)-\Lambda_N(E,s_0)$ at any given point $s_0$. }

\begin{thm}
\label{thisisatheorem3}
For any $s_0\in\mathbb{C}$, elliptic curve $L$-function $\Lambda(E,s)$, and $\sigma>1$, we have
\be
\label{eq414thm}
\Lambda(E,s_0) - \Lambda_N(E,s_0) = \frac{1}{2\pi i} \int_{\Re(s)=\sigma}\lsb \Lambda(E,s) - \Lambda^\mrm{Euler}_{N}(E,s) \rsb\lb  \frac{1}{s-s_0} + (-1)^P \frac{1}{s-1+s_0}\rb,
\ee
where $\Lambda_N(E,s_0)$ and $\Lambda_N^\mrm{Euler}(E,s)$ are defined by Eqs. \eqref{eq23Matiyasevich} and \eqref{eq114} above.
\end{thm}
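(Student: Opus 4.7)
The plan is to prove the identity by splitting the right-hand side as $I_1 - I_2$, where
\[I_1 \coloneqq \frac{1}{2\pi i}\int_{\Re(s)=\sigma} \Lambda(E,s)\lb\frac{1}{s-s_0} + \frac{(-1)^P}{s-1+s_0}\rb ds\]
and $I_2$ is the analogous integral with $\Lambda^\mrm{Euler}_N(E,s)$ in place of $\Lambda(E,s)$. I would prove $I_1 = \Lambda(E, s_0)$ and $I_2 = \Lambda_N(E, s_0)$ separately; subtracting yields the theorem. For the residue manipulations below one takes $s_0$ with $1-\sigma < \Re(s_0) < \sigma$; the general statement follows by analytic continuation in $s_0$.

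For $I_1$, I would apply Cauchy's theorem on the rectangle with vertical sides at $\Re(s) = \sigma$ and $\Re(s) = 1-\sigma$ and horizontal sides at $\Im(s) = \pm T$. Stirling's estimate \eqref{Stirling} ensures the horizontal pieces vanish as $T \to \infty$. On the left vertical side, the substitution $w = 1-s$ together with the functional equation $\Lambda(E, 1-w) = (-1)^P \Lambda(E, w)$ converts that piece into exactly $-2\pi i \cdot I_1$, because the two summands $1/(s-s_0)$ and $(-1)^P/(s-1+s_0)$ exchange roles under $s \mapsto 1-s$ and an overall sign emerges from $ds = -dw$ together with $(-1)^{2P} = 1$. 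On the other hand, the interior contains simple poles at $s_0$ and $1-s_0$ contributing $\Lambda(E,s_0) + (-1)^P \Lambda(E, 1-s_0) = 2\Lambda(E,s_0)$ to the residue sum (using the functional equation once more). Cauchy thus gives $2\pi i \cdot I_1 - (-2\pi i \cdot I_1) = 4\pi i\, \Lambda(E, s_0)$, i.e., $I_1 = \Lambda(E, s_0)$.

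For $I_2$, since $\Lambda^\mrm{Euler}_N$ does not satisfy a functional equation, I would apply Cauchy's theorem on a sparse rectangular contour (Definition \ref{sparsedef} and Lemma \ref{shortlemma1p5ish}) with vertical sides at $\Re(s) = \sigma$ and $\Re(s) = -M$, then take $M, T \to \infty$. Using the decomposition $\Lambda^\mrm{Euler}_N = \Lambda^\mrm{ingoing}_N + \sum_{s_\star \in \mathfrak{S}_N} \Lambda_N^{\mrm{pp},s_\star}$ with the first summand entire, a direct Laurent-expansion calculation based on \eqref{eq116Laurent} shows that the residue of $\Lambda^\mrm{Euler}_N(s)/(s-s_0)$ at each $s_\star$ equals $-\Lambda_N^{\mrm{pp},s_\star}(s_0)$; summed over $s_\star$, this gives $-\Lambda^\mrm{pp}_N(s_0)$, with convergence guaranteed by Remarks \ref{remconv1}, \ref{remconv2}. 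Adding the residue $\Lambda^\mrm{Euler}_N(s_0)$ at $s_0$ then yields $\Lambda^\mrm{ingoing}_N(s_0)$. Running the same argument with $s_0$ replaced by $1-s_0$ and combining, one obtains $I_2 = \Lambda^\mrm{ingoing}_N(s_0) + (-1)^P \Lambda^\mrm{ingoing}_N(1-s_0) = \Lambda_N(E, s_0)$ by \eqref{eq23Matiyasevich}.

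The main technical obstacle is showing that the three non-right sides of the sparse rectangle vanish in the $I_2$ argument. The horizontal pieces vanish by the exponential decay of the local Archimedean factor via Stirling, since $\prod_{p\leq p_N} L_p(s)$ grows at most polynomially in $\Im(s)$ while sparseness prevents near-pole blow-ups. The more delicate left vertical piece at $\Re(s) = -M$ requires that $\Lambda^\mrm{Euler}_N(s)/(s-s_0) \to 0$ fast enough on the sparse contour: the reflection formula for $\Gamma$ gives rapid decay of $|g(s)|$ as $\Re(s) \to -\infty$, while each $L_p$ satisfies $|L_p(s)| \sim p^{2\Re(s)} \to 0$ in the same regime, with the sparseness keeping each factor bounded away from its poles. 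Combining everything, $I_1 - I_2 = \Lambda(E,s_0) - \Lambda_N(E,s_0)$ delivers the claimed identity.
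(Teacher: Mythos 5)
Your proof is correct, but it takes a genuinely different route than the paper. The paper never separately evaluates the $\Lambda$-part and the $\Lambda^\mrm{Euler}_N$-part of the right-hand side. Instead, it writes $\Lambda(E,s_0)$ and $\Lambda_N(E,s_0)$ as Cauchy integrals over a single large rectangle $\mathcal{C}'$, defines the combined integrand $I(N,T,\tau,s_0)\coloneqq\frac{1}{2\pi i}\int_{\mathcal{C}'}\lb\Lambda/2-\Lambda_N^\mrm{Euler}\rb(\cdots)$, and uses the observation~\eqref{eq236ssymm} that the $\Lambda$-contribution is symmetric under $s\mapsto 1-s$ over the symmetric contour $\mathcal{C}'$. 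This lets it collapse everything to the right half $\mathcal{C}_R$, where $\Lambda-\Lambda_N^\mrm{Euler}$ appears as a single bundle; separately it shows $E(N,s_0,T,\tau)$, the contour integral of $\Lambda_N^\mrm{pp}$, and the left-side leg $I_\mathcal{L}$ both vanish as $T,\tau\to\infty$. You instead split the right-hand side as $I_1 - I_2$ and evaluate each piece on its own: $I_1=\Lambda(E,s_0)$ by closing the contour on a rectangle bounded by $\Re(s)=\sigma$ and $\Re(s)=1-\sigma$ and invoking the functional equation, and $I_2=\Lambda_N(E,s_0)$ by closing to the left on sparse rectangles and summing residues; the identity $\mathrm{Res}_{s_\star}\Lambda_N^\mrm{Euler}(E,s)/(s-s_0)=-\Lambda_N^{\mrm{pp},s_\star}(E,s_0)$ that you extract from the Laurent expansion~\eqref{eq116Laurent} is the same mechanism that drives the paper's $E(N,s_0,T,\tau)=\sum_\rho\mathrm{Res}(\rho)$ bookkeeping, just made explicit. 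Both routes rely on the same technical inputs (sparse contours of Definition~\ref{sparsedef} and Lemma~\ref{shortlemma1p5ish}, decay of $g(s)$ via Stirling, convergence of the residue sum via Remarks~\ref{remconv1},~\ref{remconv2}). What your decomposition buys is a cleaner conceptual structure — each of $I_1$, $I_2$ has an identifiable target — at the cost of an extra argument, the direct evaluation of $I_1$ via the functional equation; the paper avoids that extra step by folding the functional-equation symmetry into the contour splitting. One small clerical remark: the left vertical side of your $I_1$ rectangle, traversed with the counterclockwise orientation and reflected by $w=1-s$, contributes $+2\pi i\,I_1$ (not $-2\pi i\,I_1$); your final count of $4\pi i\,\Lambda(E,s_0)$ is nevertheless correct.
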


\begin{proof}

Let's consider a simple closed curve $\mathfrak{C}$, which does not need to be sparse in the sense of Definition \ref{sparsedef}, that encloses points $s_0$ and $1-s_0$ and does not pass through the poles arising from the local factors. Let
\be
\label{eq224poles}
\mathcal{I}\lb N, s_0, \mathfrak{C} \rb \coloneqq \frac{1}{2\pi i} \int_\mathfrak{C} \Lambda^\mrm{pp}_N(E,s) \lb \frac{1}{s-s_0} + (-1)^P \frac{1}{s-1+s_0} \rb ds,
\ee
and let $\mathcal{C}_k$, $k\in\mathbb{N}$, be a sequence of rectangular contours that are sparse w.r.t. $\mathfrak{S}_\mrm{p_N}$ in the sense of Definition \ref{sparsedef}. These contours tend to infinity, meaning that each $\mathcal{C}_k$ is contained in $\mathcal{C}_{k+1}$ and the union of the $\mathcal{C}_k$'s is the entire complex plane. Then we can write
\be
\label{eq224}
\mathcal{I}\lb N, s_0, \mathfrak{C} \rb = \frac{1}{2\pi i} \int_{\mathcal{C}_k} \Lambda^\mrm{pp}_N(E,s) \lb \frac{1}{s-s_0} + (-1)^P \frac{1}{s-1+s_0} \rb ds - \sum_\rho \mrm{Res}\lb \rho \rb,
\ee
where $\rho$ runs over all the local factor poles between $\mathcal{C}_k$ and $\mathfrak{C}$, and $\mrm{Res}\lb \rho \rb$ is the residue of $\Lambda^\mrm{pp}_N(E,s) \lsb 1/(s-s_0) + 1/(s-1+s_0) \rsb$ at $\rho$.

From Eq. \eqref{eq2p22decayatinfty}, when $|s|$ is large we have that
\be
\Lambda_N^\mrm{pp}(E,s) \lb \frac{1}{s-s_0} + (-1)^P \frac{1}{s-1+s_0} \rb \ll_{p_N,s_0} \frac{1}{|s|^2},
\ee
so that the integral in Eq. \eqref{eq224} goes to zero as $k\to\infty$. Then we are left with
\be
\label{eq227poles}
\mathcal{I}\lb N, s_0, \mathfrak{C} \rb = - \sum_\rho \mrm{Res}\lb \rho \rb,
\ee
where the $\rho$ sum runs over all the local factor poles outside curve $\mathfrak{C}$ (and, from this argument, the sum converges).

Now, as in \cite{NastasescuZaharescu}, we fix real numbers $T>1$, $\tau>1$, $\sigma>1$, and consider two counterclockwise rectangular contours $\mathcal{C}$, $\mathcal{C}'$ with vertices $\sigma-iT$, $\sigma+iT$, $-\tau+iT$, $-\tau-iT$, and respectively $1+\tau-iT$, $1+\tau+iT$, $-\tau+iT$, $-\tau-iT$. Furthermore, we choose $T$, $\sigma$, $\tau$ sufficiently large so that $s_0$ lies inside both contours.

We have
\be
\label{eqweneedthis419}
\Lambda_N^\mrm{ingoing}(E,s_0) = \frac{1}{2\pi i} \int_{\mathcal{C}'} \frac{\Lambda_N^\mrm{ingoing}(E,s) }{s-s_0} ds, \quad \Lambda_N^\mrm{ingoing}(E,1-s_0) = \frac{1}{2\pi i} \int_{\mathcal{C}'} \frac{\Lambda_N^\mrm{ingoing}(E,s) }{s-1+s_0} ds,
\ee
and so
\be
\Lambda_N(E,s_0) = \frac{1}{2\pi i} \int_{\mathcal{C}'} \Lambda_N^\mrm{ingoing}(E,s) \lb \frac{1}{s-s_0} + (-1)^P \frac{1}{s-1+s_0} \rb ds.
\ee
Since $\Lambda^\mrm{ingoing}_{N}(E,s)=\Lambda^\mrm{Euler}_{N}(E,s)-\Lambda^\mrm{pp}_{N}(E,s)$, we can split this formally as
\be
\label{eqweneedthis}
\Lambda_N(E,s_0) = \frac{1}{2\pi i} \int_{\mathcal{C}'} \Lambda_N^\mrm{Euler}(E,s) \lb \frac{1}{s-s_0} + (-1)^P \frac{1}{s-1+s_0} \rb ds + E\lb N, s_0, T,\tau \rb,
\ee
where
\be
\label{Eq231poles}
 E\lb N, s_0, T,\tau \rb = - \frac{1}{2\pi i} \int_{\mathcal{C}'} \Lambda_N^\mrm{pp}(E,s) \lb \frac{1}{s-s_0} + (-1)^P \frac{1}{s-1+s_0} \rb ds.
\ee
From Eqs. \eqref{eq224poles}, \eqref{eq227poles}, and \eqref{Eq231poles} we thus have
\be
\label{eq423needthis}
E\lb N, s_0, T,\tau \rb = \sum_\rho \mrm{Res}(\rho),
\ee
where the $\rho$ sum runs over all the local factor poles outside $\mathcal{C'}$, i.e. poles at $-l-1/2$ with $l+1/2>\tau$, and poles at $s_\star$ with $\Im\lb s_\star\rb>T$. We have that (see Remarks \ref{remconv1}, \ref{remconv2}, and the proof of Lemma~\ref{lemma1p5bound})
\be
\label{seeeq424}
E\lb N, s_0, T,\tau \rb\to 0\mrm{\ as\ }T,\tau\to\infty.
\ee

We now write $\Lambda(E,s_0)$ as a contour integral, as
\be
\Lambda(E,s_0) = \frac{1}{2}\lsb \Lambda(E,s_0) + (-1)^P \Lambda(E,1-s_0) \rsb = \frac{1}{4\pi i} \int_{\mathcal{C}'} \Lambda(E,s) \lb \frac{1}{s-s_0} + (-1)^P \frac{1}{s-1+s_0} \rb ds.
\ee
Following \cite{NastasescuZaharescu}, let's define
\be
\label{eq234IpN}
I(N,T,\tau,s_0)\coloneqq \frac{1}{2\pi i} \int_{\mathcal{C}'} \lb \frac{\Lambda(E,s)}{2} - \Lambda_N^\mrm{Euler}(E,s) \rb \lb \frac{1}{s-s_0} +(-1)^P \frac{1}{s-1+s_0} \rb,
\ee
so that by substitution we have
\be
\label{Eq235LambdaIE}
\Lambda(E,s_0) - \Lambda_N(E,s_0) = I(N,T,\tau,s_0) - E(N,T,\tau,s_0).
\ee
Note that
\be
\label{eq236ssymm}
\int_{\mathcal{C}'} \Lambda(E,s) \lb \frac{1}{s-s_0} + (-1)^P \frac{1}{s-1+s_0} \rb ds = \int_{\mathcal{C}'} \lb \frac{\Lambda(E,s)}{s-s_0} + \frac{\Lambda(E,1-s)}{s-1+s_0} \rb ds,
\ee
so that the right-hand side of Eq. \eqref{eq236ssymm} is manifestly invariant under the change of variables $s\to1-s$. Let $\mathcal{C}'_\mathcal{L}$ be the part of contour $\mathcal{C}'$ that is to the left of the critical line, and let $\mathcal{C}'_\mathcal{R}$ be the part to the right. Then, from the $s\to 1-s$ invariance of Eq.~\eqref{eq236ssymm}, the part of the integral in Eq. \eqref{eq236ssymm} on $\mathcal{C}'_L$ equals that on $\mathcal{C}'_R$, and we can write Eq. \eqref{eq234IpN} as
\ba
\qquad I(N,T,\tau,s_0) &=& I_\mathcal{L}(N,T,\tau,s_0) + I_\mathcal{R}(N,T,\sigma,s_0),\\
\label{eqIL239}
\qquad I_\mathcal{L}(N,T,\tau,s_0) &\coloneqq& - \frac{1}{2\pi i} \int_{\mathcal{C}_\mathcal{L}} \Lambda_N^\mrm{Euler}(E,s) \lb \frac{1}{s-s_0} +(-1)^P \frac{1}{s-1+s_0} \rb ds,\\
\label{eqIR240}
\qquad I_\mathcal{R}(N,T,\sigma,s_0) &\coloneqq& \frac{1}{2\pi i} \int_{\mathcal{C}_\mathcal{R}} \lsb \Lambda(E,s) - \Lambda_N^\mrm{Euler}(E,s) \rsb \lb \frac{1}{s-s_0} +(-1)^P \frac{1}{s-1+s_0} \rb ds.
\ea
Note that we have also replaced $\mathcal{C}_L'$, $\mathcal{C}_R'$ by $\mathcal{C}_L$, $\mathcal{C}_R$, since $\mathcal{C}_L'=\mathcal{C}_L$ and there are no poles between $\mathcal{C}$ and $\mathcal{C}'$.

The functions $\Lambda^\mrm{Euler}_N(E,s)$, $\Lambda(E,s)$ contain a gamma factor in $g(s)$,~and so from Stirling's formula the contribution of each of the horizontal segments in Eqs. \eqref{eqIL239}, \eqref{eqIR240} vanishes in the limit $T\to\infty$, so that
\ba
\lim_{T\to\infty} I_\mathcal{L}(N,T,\tau,s_0) &=& - \frac{1}{2\pi i} \int_{\Re\lb s\rb=-\tau} \Lambda_N^\mrm{Euler}(E,s) \lb \frac{1}{s-s_0} +(-1)^P \frac{1}{s-1+s_0} \rb ds,\nn\\
\lim_{T\to\infty} I_\mathcal{R}(N,T,\sigma,s_0) &=& \frac{1}{2\pi i} \int_{\Re\lb s\rb=\sigma} \lsb \Lambda(E,s) - \Lambda_N^\mrm{Euler}(E,s) \rsb \lb \frac{1}{s-s_0} +(-1)^P \frac{1}{s-1+s_0} \rb ds. \nn
\ea 
Furthermore, the integral on the vertical segment in $I_\mathcal{L}$ can be bounded as
\be
\lim_{T\to\infty} \left| I_\mathcal{L}(N,T,\tau,s_0) \right| \ll \lb \prod_{p=2}^{p_N} p^{-\tau_p} \rb \int_{-\infty}^\infty \left|  g(-\tau+it) \lsb \frac{1}{-\tau+it-s_0} + (-1)^P \frac{1}{-\tau+it-1+s_0} \rsb \right| dt ,
\ee
where
\be
\tau_p \coloneqq \begin{cases}
1 \text{\ \ \ if } p \text{ is bad additive}\\
\tau \text{\ \ \ if } p \text{ is bad multiplicative} \\
2\tau \text{ if } p \text{ is good reduction} \\
\end{cases},
\ee
so that $\lim_{T\to\infty} \left| I_\mathcal{L}(N,T,\tau,s_0) \right|$  vanishes in the limit $\tau\to\infty$.

Using Eq. \eqref{Eq235LambdaIE}, and that $E\lb N, s_0, T,\tau \rb\to 0$ vanishes in the limit $T,\tau\to\infty$,  we have thus arrived at
\be
\label{eq240like427}
\Lambda(E,s_0) - \Lambda_N(E,s_0) = \frac{1}{2\pi i} \int_{\Re(s)=\sigma}\lsb \Lambda(E,s) - \Lambda^\mrm{Euler}_{N}(E,s) \rsb\lb  \frac{1}{s-s_0} + (-1)^P \frac{1}{s-1+s_0}\rb ds.
\ee
Eq. \eqref{eq240like427} is an exact relation, and is the analogue of Eq. (4.27) in \cite{NastasescuZaharescu}.
\end{proof}

\extratext{In Theorem \ref{thm2approx} we will push further the approach started with Theorem \ref{thisisatheorem3}, by giving a series for the right-hand side of Eq. \eqref{eq414thm}, with the terms expressed in closed-form. This closed-form expression will allow us to finally obtain the asymptotic formula in Theorem \ref{theoremintro} for the difference $\Lambda(E,s_0) - \Lambda_N(E,s_0)$.}

\begin{thm} 
\label{thm2approx}
For any $N>1$, $s_0\in\mathbb{C}$, elliptic curve $L$-function $\Lambda(E,s)$ with conductor $C$, and approximation $\Lambda_N(E,s)$ defined as in Eq.~\eqref{eq23Matiyasevich}, we have
\ba
\label{eq161}
\Lambda(E,s_0) - \Lambda_N(E,s_0) &=& 2 \sum_{n=p_{N+1}}^\infty c_n \Bigg[ \frac{C^\frac{s_0}{2}}{\left(2\pi n\right)^{s_0+\frac{1}{2}}} \Gamma\left(s_0+\frac{1}{2},\frac{2 \pi n}{\sqrt{C}}\right)\\
&+& (-1)^P \frac{C^\frac{1-s_0}{2}}{\left(2\pi n\right)^{-s_0+\frac{3}{2}}} \Gamma\left(-s_0+\frac{3}{2},\frac{2 \pi n}{\sqrt{C}}\right) \Bigg] \nn
\ea
where
\be
c_n \coloneqq
\begin{cases}
a_n & \text{if\ } p |n \text{\ for some prime } p> p_N\\
0 & \text{otherwise}
\end{cases}.
\ee
\end{thm}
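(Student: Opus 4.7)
The plan is to start from the contour-integral representation of $\Lambda(E,s_0)-\Lambda_N(E,s_0)$ supplied by Theorem~\ref{thisisatheorem3}, expand the integrand into a Dirichlet series, and evaluate each resulting term through a standard Mellin identity. Fix $\sigma>\max(1,\Re(s_0),1-\Re(s_0))$. On the vertical line $\Re(s)=\sigma$, both $L(E,s)=\sum_{n\geq 1}a_n n^{-(s+1/2)}$ and the truncated Euler product $\prod_{p\leq p_N}L_p(E,s)$ converge absolutely; by multiplicativity, the coefficient of $n^{-(s+1/2)}$ in the truncated product equals $a_n$ exactly when every prime factor of $n$ is at most $p_N$, and vanishes otherwise. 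Subtracting yields
\[
\Lambda(E,s)-\Lambda_N^{\mrm{Euler}}(E,s) \;=\; g(s)\sum_{n=p_{N+1}}^{\infty}\frac{c_n}{n^{s+1/2}},
\]
where the summation index begins at $n=p_{N+1}$ because that is the smallest integer admitting a prime factor exceeding $p_N$.

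Substituting this expression into Eq.~\eqref{eq414thm} and interchanging summation with integration (justified by Stirling's estimate on $|g(s)|$ on vertical lines, the $O(|s|^{-1})$ decay of the kernel outside a compact set, and the absolute convergence of $\sum_n|c_n|n^{-\sigma-1/2}$ for $\sigma>1$), the problem reduces to evaluating
\[
\frac{1}{2\pi i}\int_{(\sigma)}\frac{g(s)}{n^{s+1/2}}\left(\frac{1}{s-s_0}+\frac{(-1)^P}{s-1+s_0}\right)ds
\]
for each $n$. The substitution $u=s+\tfrac12$ turns $g(s)/n^{s+1/2}$ into $\tfrac{2}{C^{1/4}}\bigl(\tfrac{\sqrt{C}}{2\pi n}\bigr)^{u}\Gamma(u)$, and relocates the two simple poles of the kernel to $u=s_0+\tfrac12$ and $u=\tfrac32-s_0$.

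The core computational step is the Mellin identity
\[
\frac{1}{2\pi i}\int_{(c)}\frac{\Gamma(u)\,x^{-u}}{u-a}\,du \;=\; x^{-a}\,\Gamma(a,x) \qquad \bigl(x>0,\ c>\max(0,\Re(a))\bigr),
\]
proved by writing $1/(u-a)=\int_1^\infty t^{a-u-1}dt$ (valid since $\Re(u-a)>0$), swapping the two integrals, invoking the standard Mellin inversion $\frac{1}{2\pi i}\int_{(c)}\Gamma(u)(xt)^{-u}du=e^{-xt}$, and changing variable $v=xt$. Applying it with $x=2\pi n/\sqrt{C}$ and each of the two values $a=s_0+\tfrac12$ and $a=\tfrac32-s_0$, and collecting the fractional powers (the factor $\tfrac{2}{C^{1/4}}$ combines with $C^{(s_0+1/2)/2}$ to give $2\,C^{s_0/2}$, and likewise $\tfrac{2}{C^{1/4}}$ combines with $C^{(3/2-s_0)/2}$ to give $2\,C^{(1-s_0)/2}$) reproduces exactly the two summands of \eqref{eq161}. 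The only real nuisance is the careful bookkeeping of these fractional powers when reassembling; both of the analytic ingredients, Fubini and the Mellin identity, are entirely standard, so the genuinely hard analytic work has already been carried out in Theorem~\ref{thisisatheorem3}.
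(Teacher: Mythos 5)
Your proof is correct and follows the same overall strategy as the paper: begin from the contour-integral representation supplied by Theorem~\ref{thisisatheorem3}, expand $\Lambda(E,s)-\Lambda_N^{\mathrm{Euler}}(E,s)$ as the Dirichlet series $g(s)\sum_{n\geq p_{N+1}} c_n n^{-s-1/2}$, interchange sum and integral, and then reduce to evaluating a single vertical-line integral for each $n$. The paper formulates this last step as computing $K_n(s_0)=\int_{\Re(s)=\sigma} g(s)\,n^{-s-1/2}(s-s_0)^{-1}\,ds$, and so do you, up to the substitution $u=s+\tfrac12$.

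Where the two arguments genuinely diverge is in how that integral is evaluated. The paper shifts the line of integration left to $\Re(s)=-b$ with $b\to\infty$, collects the residue at $s=s_0$ and the residues at every negative half-integer $s=-k-\tfrac12$, checks that the shifted integral vanishes, and then invokes a tabulated series identity (Gradshteyn--Ryzhik, p.~941) to re-sum the infinite family of residues into an incomplete Gamma function. You instead recognize the integral, after the substitution $u=s+\tfrac12$, as the Mellin--Barnes identity $\frac{1}{2\pi i}\int_{(c)}\Gamma(u)\,x^{-u}(u-a)^{-1}\,du = x^{-a}\Gamma(a,x)$ for $c>\max(0,\Re(a))$, and give a direct Fubini-style proof of that identity by writing $(u-a)^{-1}=\int_1^\infty t^{a-u-1}\,dt$ and applying Mellin inversion. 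Both routes are standard and land on the same formula $K_n(s_0)=4\pi i\,C^{s_0/2}(2\pi n)^{-s_0-1/2}\Gamma(s_0+\tfrac12, 2\pi n/\sqrt{C})$; your route avoids having to compute and re-sum a residue series and is self-contained rather than relying on a table lookup. The power-counting in $C$ also checks out: $\tfrac{2}{C^{1/4}}\cdot C^{(s_0+1/2)/2}=2C^{s_0/2}$ for the pole at $a=s_0+\tfrac12$, and $\tfrac{2}{C^{1/4}}\cdot C^{(3/2-s_0)/2}=2C^{(1-s_0)/2}$ for the pole at $a=\tfrac32-s_0$. The hypotheses of your Mellin identity ($c>\Re(a)$ for both values of $a$) are met precisely because $\sigma>\max(\Re(s_0),\,1-\Re(s_0))$.
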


\begin{proof}[Proof of Theorem \ref{theoremintro} and Theorem \ref{thm2approx}]
For $\Re(s)\geq 1$ (in analytic convention) we have the identity
\be
\prod_{p=2}^{p_N} L_p(E,s) = \sum_{n=p_{N+1}}^\infty \frac{c_n}{n^{s+\frac{1}{2}}},
\ee
so that from Eq. \eqref{eq240like427} we have
\be
\label{eq440thisis}
\Lambda(E,s_0) - \Lambda_N(E,s_0) = \frac{1}{2\pi i} \sum_{n=p_{N+1}}^\infty \int_{\Re(s)=\sigma} g(s) \frac{c_n}{n^{s+\frac{1}{2}}} \lb  \frac{1}{s-s_0} + (-1)^P \frac{1}{s-1+s_0}\rb ds.
\ee

Let
\be
J_n(s_0) \coloneqq \int_{\Re(s)=\sigma} g(s) \frac{c_n}{n^{s+\frac{1}{2}}} \lb  \frac{1}{s-s_0} + (-1)^P \frac{1}{s-1+s_0}\rb ds,
\ee
and we define
\be
\label{Kns0is}
K_n(s_0) \coloneqq \int_{\Re(s)=\sigma} \frac{g(s)}{n^{s+\frac{1}{2}}} \frac{1}{s-s_0} ds
\ee
so that
\be
\label{JisKsymmetrized}
J_n(s_0) = c_n\lsb K_n(s_0) + (-1)^P K_n(1-s_0) \rsb.
\ee
We now compute $K_n(s_0)$, using that $\sigma>|s_0|$. For any positive integer $b$, we shift the entire vertical line $\Re(s)=\sigma$ to the left to $\Re(s)=-b$. This is allowed, because when $T\to \infty$ the integral of the argument in Eq. \eqref{Kns0is} on the horizontal lines from $-b\pm iT$ to $\sigma\pm iT$ decays rapidly due to the gamma factor in $g(s)$. This shift picks up a contribution from the poles at $s=s_0$ and at $s=-k-1/2$, $k\in\mathbb{N}$, $k\leq b-1$. Then
\be
\label{Kniswithextraintegral}
K_n(s_0) = 2 \pi i \lsb \mrm{Res}_{s=s_0} \frac{g(s)}{n^{s+\frac{1}{2}}} \frac{1}{s-s_0} + \sum_{k=0}^{b-1} \mrm{Res}_{s=-k-1/2} \frac{g(s)}{n^{s+\frac{1}{2}}} \frac{1}{s-s_0}  \rsb + \int_{\Re(s)=-b} \frac{g(s)}{n^{s+\frac{1}{2}}} \frac{1}{s-s_0} ds.
\ee
From the relation $\Gamma(s+1)=s\Gamma(s)$, for $s=-b+it$, $b\in\mathbb{N}^\times$, we have
\be
\left|\Gamma\lb -b + \frac{1}{2} + it \rb\right| = \frac{\left|\Gamma\lb \frac{1}{2} + it \rb\right|}{\left| b - \frac{1}{2}\right|\left| b - \frac{3}{2}\right| \dots \frac{3}{2}\frac{1}{2} } \leq \frac{2\left|\Gamma\lb \frac{1}{2} + it \rb\right|}{(b-1)!},
\ee
so that
\ba
\left|\int_{\Re(s)=-b} \frac{g(s)}{n^{s+\frac{1}{2}}} \frac{1}{s-s_0} ds\right| &\leq& \frac{4}{(b-1)!} \int^\infty_{-\infty} \left| \frac{C^\frac{-b+it}{2}\Gamma\lb \frac{1}{2} + it \rb}{(2\pi n)^{-b+it+\frac{1}{2}}} \right| \frac{dt}{|-b+it-s_0|} \\
&=& \frac{4 (2\pi n)^{b-\frac{1}{2}} }{(b-1)! C^\frac{b}{2} } \int^\infty_{-\infty} \left| \frac{\Gamma\lb \frac{1}{2} + it \rb}{-b+it-s_0} \right| dt
\ea
vanishes in the limit $b\to\infty$. We have thus obtained
\be
\label{Knis}
K_n(s_0) = 2 \pi i \lsb \mrm{Res}_{s=s_0} \frac{g(s)}{n^{s+\frac{1}{2}}} \frac{1}{s-s_0} + \sum_{k=0}^\infty \mrm{Res}_{s=-k-1/2} \frac{g(s)}{n^{s+\frac{1}{2}}} \frac{1}{s-s_0}  \rsb,
\ee
where
\ba
\label{eq165}
\mrm{Res}_{s=s_0} \frac{g(s)}{n^{s+\frac{1}{2}}} \frac{1}{s-s_0} &=&
   \frac{2C^\frac{s_0}{2}}{\lb 2\pi n\rb^{s_0+\frac{1}{2}}} \Gamma\left(s_0+\frac{1}{2}\right),\\
\label{eqref166}
\mrm{Res}_{s=-k-1/2} \frac{g(s)}{n^{s+\frac{1}{2}}} \frac{1}{s-s_0} &=& -\frac{4(-2\pi n)^k}{k!C^{\frac{1}{4}+\frac{k}{2}}(2s_0+2k+1)}.
\ea
The sum over $k$ can be performed, using Eq. \eqref{eqref166} we have (see \cite{GradRyzhik},~page~941)
\be
\label{eq167}
\sum_{k=0}^\infty \mrm{Res}_{s=-k-1/2} \frac{g(s)}{n^{s+\frac{1}{2}}} \frac{1}{s-s_0} = -\frac{2 C^\frac{s_0}{2}}{\left(2\pi n\right)^{s_0+\frac{1}{2}}} \lsb \Gamma\left(s_0+\frac{1}{2}\right) - \Gamma\left(s_0+\frac{1}{2},\frac{2 \pi n}{\sqrt{C}}\right) \rsb,
\ee
where $\Gamma\lb z,a \rb$ is the incomplete Gamma function,
\be
\Gamma\lb z,a \rb \coloneqq \int_a^\infty t^{z-1} e^{-t} dt.
\ee
Plugging in Eqs. \eqref{eq165} and \eqref{eq167} in Eq. \eqref{Knis}, a cancellation takes place and we obtain
\be
\label{eq169}
K_n(s_0) = \frac{4 \pi i C^\frac{s_0}{2}}{\left(2\pi n\right)^{s_0+\frac{1}{2}}} \Gamma\left(s_0+\frac{1}{2},\frac{2 \pi n}{\sqrt{C}}\right).
\ee
Eq. \eqref{eq169} is an exact result. Thus, for all $n$ with $p|n$ for some $p>p_N$, we have obtained
\be
\label{eq178passing}
J_n(s_0) = 4\pi i a_n \lsb \frac{C^\frac{s_0}{2}}{\left(2\pi n\right)^{s_0+\frac{1}{2}}} \Gamma\left(s_0+\frac{1}{2},\frac{2 \pi n}{\sqrt{C}}\right) + (-1)^P \frac{C^\frac{1-s_0}{2}}{\left(2\pi n\right)^{-s_0+\frac{3}{2}}} \Gamma\left(-s_0+\frac{3}{2},\frac{2 \pi n}{\sqrt{C}}\right) \rsb.
\ee
This completes the proof of Theorem \ref{thm2approx}.

In order to finish the proof of Theorem \ref{theoremintro}, we use the fact that when $n$ is large, $K_n(s_0)$ can be expanded in a series as (\cite{GradRyzhik},~page~942)
\be
K_n(s_0) = \frac{2iC^\frac{1}{4}}{n} e^{-\frac{2 \pi  n}{\sqrt{C}}} \lsb 1 + \frac{(2s_0-1)C^\frac{1}{2}}{4\pi n}+ \mathcal{O}(n^{-2})\rsb.
\ee
Using Eqs. \eqref{eq169} and \eqref{JisKsymmetrized}, we arrive at (for all $n$ with $p|n$ for some $p>p_N$)
\be
J_n(s_0) = \frac{2i a_n C^\frac{1}{4}}{n} e^{-\frac{2 \pi  n}{\sqrt{C}}}  \lsb 1 + (-1)^P + \frac{(2s_0-1)C^\frac{1}{2}}{4\pi n} \lb 1 - (-1)^P \rb + \mathcal{O}(n^{-2}) \rsb.
\ee
In Eq. \eqref{eq440thisis} we now consider separately the term $n=p_{N+1}$, which appears on the left-hand side of Eq. \eqref{eq13intro}, and the sum over terms with $n\geq p_{N+2}$, which we bound in absolute value, using $|a_p|<2\sqrt{p}$. The proof of Theorem \ref{theoremintro} follows.
\end{proof}

\begin{remark}
Using Eq. \eqref{eq178passing}, and that $\Gamma\lb 1,a \rb=e^{-a}$, at the central point we have the exact relation
\be
J_n\lb 1/2\rb = \frac{2i a_n C^\frac{1}{4}}{n} e^{-\frac{2 \pi  n}{\sqrt{C}}}  \lb 1 + (-1)^P \rb.
\ee
When $(-1)^P=-1$, we have $J_n\lb 1/2\rb=0$ and $\Lambda_N(1/2)=0$ for all $n,N$.
\end{remark}

Finally, we prove Theorem \ref{thmextra}.
\begin{proof}[Proof of Theorem \ref{thmextra}]
Following the same steps as in Eqs. \eqref{eqweneedthis419} --  \eqref{eqweneedthis}, we can write
\be
\label{eq456thm22}
\Lambda_N(E,s_0) = \frac{1}{2\pi i} \int_{\mathcal{C}} \Lambda_N^\mrm{Euler}(E,s) \lb \frac{1}{s-s_0} + (-1)^P \frac{1}{s-1+s_0} \rb ds + E\lb N, s_0, T,\tau \rb,
\ee
where $\mathcal{C}$ is a  rectangular contour enclosing points $s_0$, $1-s_0$, with vertices at $\sigma-iT$, $\sigma+iT$, $-\tau + iT$, $-\tau-iT$ for real numbers $T,\tau, \sigma>1$. $E\lb N, s_0, T,\tau \rb$ is the contour integral of $\Lambda^\mrm{pp}_N(E,s)$, and as in Eq. \eqref{eq423needthis}, we can write it as
\be
E\lb N, s_0, T,\tau \rb = \sum_\rho \mrm{Res}(\rho),
\ee
where the $\rho$ sum runs over all the local factor poles outside $\mathcal{C}$.

Now take $T\to\infty$ so that the contribution of the horizontal legs of $\mathcal{C}$ to Eq. \eqref{eq456thm22} vanishes due to the rapid decay of the Gamma function in the imaginary direction. Furthermore, take $\tau\to\infty$ along of a sequence of positive integers, so that the left vertical leg of $\mathcal{C}$ at real part $-\tau$ passes midway between two consecutive poles of $g(s)$, and the contribution of the left vertical leg to Eq.~\eqref{eq456thm22} vanishes. We have (see Eq. \eqref{seeeq424})
\be
E\lb N, s_0, T,\tau \rb\to 0\mrm{\ as\ }T,\tau\to\infty,
\ee
so we obtain
\be
\Lambda_N(E,s_0) = \frac{1}{2\pi i} \int_{\Re(s) = \sigma} \Lambda_N^\mrm{Euler}(E,s) \lb \frac{1}{s-s_0} + (-1)^P \frac{1}{s-1+s_0} \rb ds.
\ee
Taking $N\to \infty$ and invoking Theorem \ref{theoremintro} we obtain
\be
\Lambda(E,s_0) = \frac{1}{2\pi i} \int_{\Re(s)=\sigma} g(s) \sum_{n = 1}^\infty \frac{ a_n }{n^{s+\frac{1}{2}}} \lb \frac{1}{s-s_0} + (-1)^P \frac{1}{s-1+s_0} \rb ds,
\ee
where we used that $\sigma>1$ in order to switch between the Euler product and Dirichlet series presentations of the $L$-function in the integrand.
\end{proof}

\section{\extratext{A sequence of regular polygons}}
\label{sec5proofsforpolygons}

\extratext{In this section we will prove Theorem \ref{thm3ngon}. Our strategy is to analyze the various quantities appearing in Eq. \eqref{eq13intro}. To do this we will employ the Sato-Tate distribution, and existing knowledge of the distribution of gaps between consecutive primes. The strategy is to avoid primes for which $|a_{p_{N+1}}|$ is too small, and also primes for which the gap $p_{N+2}-p_{N+1}$ is too small. Then the two terms on the right-hand side of Eq. \eqref{eq13intro} can be bounded in a convenient manner, and Eq. \eqref{eq13intro} will indeed become an asymptotic formula for the difference $\Lambda(E,s)-\Lambda_N(E,s)$.}

\begin{proof}

We will prove Theorem \ref{thm3ngon} under the assumption that $c_m>0$ (recall that $c_m$ is the first nonzero coefficient in the Taylor series of $\Lambda(E,s)$ around $s=1/2$, see Eq. \eqref{eq15TaylorL}). A similar proof holds when $c_m<0$.

From Theorem \ref{theoremintro} we have
\ba
\label{eq55}
& &\left|\Lambda(E,s) - \Lambda_N(E,s) -\frac{ a_{p_{N+1}} C^\frac{1}{4}}{\pi p_{N+1}} e^{-\frac{2 \pi  p_{N+1}}{\sqrt{C}}}  \lsb 1 + (-1)^P + \frac{(2s-1)C^\frac{1}{2}}{4\pi p_{N+1}} \lb 1 - (-1)^P \rb \rsb \right| \\
&\leq & B_1(\Lambda,\mathcal{R}) \frac{ \left| a_{p_{N+1}}\right| }{p^3_{N+1}} e^{-\frac{2 \pi  p_{N+1}}{\sqrt{C}}} + B_2(\Lambda,\mathcal{R}) \frac{1}{\lb p_{N+2}\rb^{1-(-1)^P/2}} e^{-\frac{2\pi p_{N+2}}{\sqrt{C}}}. \nn
\ea

From the Taylor series for $\Lambda(E,s)$ around $s=1/2$, for all $s\in\mathbb{C}$ with $|s-1/2|\leq 1$ we have
\be
\label{eq57ineq}
\left| \Lambda(E,s) - c_m\lb s - 1/2 \rb^m \right| \leq B_3(\Lambda)|s - 1/2|^{m+2},
\ee
for a constant $B_3(\Lambda)>0$ that depends only on the $L$-function $\Lambda(E,s)$.

Suppose now that $P$ is even. Then from Eq. \eqref{eq55} we have
\ba
\label{eq58ineq}
& &\left|\Lambda(E,s) - \Lambda_N(E,s) -\frac{2a_{p_{N+1}} C^\frac{1}{4}}{\pi p_{N+1}} e^{-\frac{2 \pi  p_{N+1}}{\sqrt{C}}}   \right| \leq B_1(\Lambda,\mathcal{R}) \frac{ \left| a_{p_{N+1}}\right| }{p^3_{N+1}} e^{-\frac{2 \pi  p_{N+1}}{\sqrt{C}}} \\
& & + B_2(\Lambda,\mathcal{R}) \frac{1}{\lb p_{N+2}\rb^{1/2}} e^{-\frac{2\pi p_{N+2}}{\sqrt{C}}}. \nn
\ea
With
\be
G(s) \coloneqq c_m \lb s-\frac{1}{2} \rb^m -\frac{2a_{p_{N+1}} C^\frac{1}{4}}{\pi p_{N+1}} e^{-\frac{2 \pi  p_{N+1}}{\sqrt{C}}}
\ee
we have
\ba
\label{eq510ineq}
\left| G(s) - \Lambda_N(E,s) \right| &\leq& \left| G(s) - \Lambda_N(E,s) + \Lambda(E,s) - c_m \lb s-\frac{1}{2} \rb^m \right| \\
& &+\left| \Lambda(E,s) - c_m \lb s-\frac{1}{2} \rb^m \right|. \nn
\ea
From Eqs. \eqref{eq57ineq}, \eqref{eq58ineq}, \eqref{eq510ineq} we thus obtain
\ba
\label{thisis511}
\quad\quad \left| G(s) - \Lambda_N(E,s) \right| &\leq& B_1(\Lambda,\mathcal{R}) \frac{ \left| a_{p_{N+1}}\right| }{p^3_{N+1}} e^{-\frac{2 \pi  p_{N+1}}{\sqrt{C}}} + B_2(\Lambda,\mathcal{R}) \frac{1}{\lb p_{N+2}\rb^{1/2}} e^{-\frac{2\pi p_{N+2}}{\sqrt{C}}} \\
& &+ B_3(\Lambda)|s - 1/2|^{m+2}.\nn
\ea
Let $w_j$, $j\in\{1,\dots m\}$, be the zeros of $G(s)$, which are at
\be
w_j= \frac{1}{2} + \left| \frac{ 2 a_{p_{N+1}} C^\frac{1}{4}}{\pi p_{N+1}c_m} e^{-\frac{2 \pi  p_{N+1}}{\sqrt{C}}} \right|^\frac{1}{m} \lb \sgn a_{p_{n+1}} \rb^\frac{1}{m} e^\frac{2\pi i j}{m}.
\ee
Fix a small $\delta>0$ and consider the circles $C_{j,\Delta}$ of radius 
\be
\Delta=\delta \left| \frac{ 2 a_{p_{N+1}} C^\frac{1}{4}}{\pi p_{N+1}c_m} e^{-\frac{2 \pi  p_{N+1}}{\sqrt{C}}} \right|^\frac{1}{m}
\ee
centered at $w_j$. For $s\in C_{j,\Delta}$ we have
\ba
\label{eq514G}
|G(s)| &=& |c_m| \left| s - w_1 \right| \dots\left| s- w_m \right| \\
&=& |c_m| \Delta \prod_{\substack{l=1\\l\neq j}}^m |s - w_l|.\nn
\ea
Each factor on the right-hand side of Eq. \eqref{eq514G} satisfies
\ba
\label{eq515G}
|s - w_l| &\geq& |w_j - w_l| - |w_j - s| \\
&=& \left| \frac{ 2 a_{p_{N+1}} C^\frac{1}{4}}{\pi p_{N+1}c_m} e^{-\frac{2 \pi  p_{N+1}}{\sqrt{C}}} \right|^{\frac{1}{m}} \lb \left| e^{\frac{2\pi i j}{m}} - e^{\frac{2\pi i l}{m}} \right| - \delta \rb, \nn
\ea
so that from Eqs. \eqref{eq514G}, \eqref{eq515G} it follows that for $s\in C_{j,\Delta}$
\ba
\label{eq516forrouche}
|G(s)| &\geq& \delta \left| \frac{ 2 a_{p_{N+1}} C^\frac{1}{4}}{\pi p_{N+1}} e^{-\frac{2 \pi  p_{N+1}}{\sqrt{C}}} \right| \prod_{\substack{l=1\\l\neq j}}^m \lb \left| e^{\frac{2\pi i j}{m}} - e^{\frac{2\pi i l}{m}} \right| - \delta \rb\\
&\geq& \delta \left| \frac{ 2 a_{p_{N+1}} C^\frac{1}{4}}{\pi p_{N+1}} e^{-\frac{2 \pi  p_{N+1}}{\sqrt{C}}} \right| E_m,\nn
\ea
where $E_m>0$ depends only on $m$ (assuming that $\delta$ is small enough as a function of $m$).

We need the following three inequalities,
\ba
\label{517start}
B_1(\Lambda,\mathcal{R}) \frac{ \left| a_{p_{N+1}}\right| }{p^3_{N+1}} e^{-\frac{2 \pi  p_{N+1}}{\sqrt{C}}} \leq  \frac{\delta  E_m}{4} \left| \frac{ 2 a_{p_{N+1}} C^\frac{1}{4}}{\pi p_{N+1}} e^{-\frac{2 \pi  p_{N+1}}{\sqrt{C}}} \right|, \\
B_2(\Lambda,\mathcal{R}) \frac{1}{\lb p_{N+2}\rb^{1/2}} e^{-\frac{2\pi p_{N+2}}{\sqrt{C}}} \leq  \frac{\delta  E_m}{4} \left| \frac{ 2 a_{p_{N+1}} C^\frac{1}{4}}{\pi p_{N+1}} e^{-\frac{2 \pi  p_{N+1}}{\sqrt{C}}} \right|, \\
\label{eq5p19}
B_3(\Lambda)\left|s - \frac{1}{2}\right|^{m+2} \leq  \frac{\delta  E_m}{4} \left| \frac{ 2 a_{p_{N+1}} C^\frac{1}{4}}{\pi p_{N+1}} e^{-\frac{2 \pi  p_{N+1}}{\sqrt{C}}} \right|,
\ea
for $s$ on each circle $C_{j,\Delta}$. The first inequality reduces to
\be
\frac{ B_1(\Lambda,\mathcal{R})}{p^2_{N+1}} \leq  \frac{\delta  E_mC^\frac{1}{4}}{2\pi} ,
\ee
which holds true for all $p_{N+1}$ large enough in terms of $\delta$. The second inequality reduces to 
\be
\label{eq521holds}
B_2(\Lambda,\mathcal{R}) \leq  \frac{\delta E_m C^\frac{1}{4} }{2\pi} \frac{ \left| a_{p_{N+1}} \right|  p_{N+2}^{1/2} }{p_{N+1}} e^{\frac{2 \pi \lb  p_{N+2} - p_{N+1} \rb}{\sqrt{C}}}.
\ee
Let $\mathfrak{B}^+_{\Lambda,\delta}$ be the set of primes $p_N$ such that
\be
\label{eq522ap}
a_{p_{N+1}} \geq \delta \sqrt{p_{N+1}},
\ee
and
\be
\label{B2eq523}
B_2(\Lambda,\mathcal{R}) \leq  \frac{\delta^2 E_m C^\frac{1}{4} }{2\pi} e^{\frac{2 \pi \lb  p_{N+2} - p_{N+1} \rb}{\sqrt{C}}}.
\ee
For all $p_N\in \mathfrak{B}^+_{\Lambda,\delta}$, Eq. \eqref{eq521holds} holds. From the Sato-Tate distribution (see \cite{satotate1,satotate3,satotate2}) we know that the set of $p_N$'s that verify Eq.~\eqref{eq522ap} has density
\ba
\label{522lim}
\lim_{M\to\infty} \frac{ \# \{ p_N \leq M: p_N \text{\ satisfies\ Eq.~\eqref{eq522ap}} \} }{\#\{p\leq M\}} &=& \frac{2}{\pi} \int_{0}^{\arccos\frac{\delta}{2}} \sin^2\theta d\theta \nn \\
&=& \frac{-\delta\sqrt{4-\delta^2}+4\arccos\frac{\delta}{2}}{4\pi}\\
&=& \frac{1}{2}-\frac{\delta}{\pi} + O\lb\delta^2 \rb\nn
\ea 
in the full set of primes. Similarly, one defines $\mathfrak{B}^-_{\Lambda,\delta}$ as the set of primes $p_N$ which satisfy Eq.~\eqref{B2eq523}~and
\be
a_{p_{N+1}} \leq -\delta \sqrt{p_{N+1}}.
\ee
The density of $\mathfrak{B}^-_{\Lambda,\delta}$ in the full set of primes is also $1/2-\delta/\pi+O(\delta^2)$.

It is well-known (see for example \cite{sievebook2,sievebook1,cojocaru}) that for any positive even number $d$,
\be
\label{eq523bounds}
\#\{ p_N \leq M : p_{N+2}-p_{N+1}=d \} = O_d\lb \frac{M}{\ln^2 M} \rb.
\ee
Adding bounds \eqref{eq523bounds} for all $d$ up to 
\be
 \frac{\sqrt{C}}{2\pi} \ln \lb \frac{2\pi B_2(\Lambda,\mathcal{R})}{\delta^2 E_m C^{\frac{1}{4}}} \rb
\ee
we obtain that
\be
\label{525lim}
\frac{ \# \{ p_N \leq M: p_N \text{\ satisfies\ Eq. \eqref{B2eq523}} \} }{\#\{p\leq M\}} = 1 - O_{\Lambda,\mathcal{R},\delta}\lb \frac{1}{\ln M} \rb.
\ee
From Eqs. \eqref{522lim} and \eqref{525lim} it follows that
\be
\label{thisis527}
\lim_{M\to\infty} \frac{ \# \{ p_N \leq M: p_N \in \mathfrak{B}^\pm_{\Lambda,\delta} \} }{\#\{p\leq M\}} = \frac{1}{2} - O\lb \delta \rb,
\ee
i.e. both $\mathfrak{B}^+_{\Lambda,\delta}$, $\mathfrak{B}^-_{\Lambda,\delta}$ have density $1/2-O\lb\delta\rb$.

For the third inequality, pick a point $s$ in $C_{j,\Delta}$, and note that
\ba
\left| s - \frac{1}{2} \right| &\leq& \left| s - w_j \right| + \left| w_j - \frac{1}{2} \right| \\
&=& \Delta + \left| \frac{ 2 a_{p_{N+1}} C^\frac{1}{4}}{\pi p_{N+1}c_m} e^{-\frac{2 \pi  p_{N+1}}{\sqrt{C}}} \right|^\frac{1}{m} \nn \\
&=&\lb 1 + \delta \rb \left| \frac{ 2 a_{p_{N+1}} C^\frac{1}{4}}{\pi p_{N+1}c_m} e^{-\frac{2 \pi  p_{N+1}}{\sqrt{C}}} \right|^\frac{1}{m}. \nn
\ea
Therefore,
\be
\label{5p23}
B_3(\Lambda)\left|s - \frac{1}{2}\right|^{m+2} \leq B_3(\Lambda) \lb 1 + \delta \rb^{m+2} \left| \frac{ 2 a_{p_{N+1}} C^\frac{1}{4}}{\pi p_{N+1}c_m} e^{-\frac{2 \pi  p_{N+1}}{\sqrt{C}}} \right|^\frac{m+2}{m}.
\ee
In order to prove Eq. \eqref{eq5p19} it is enough to see that the right-side of Eq. \eqref{5p23} is smaller than the right side of Eq. \eqref{eq5p19}. This is equivalent to
\be
B_3(\Lambda) \lb 1 + \delta \rb^{m+2} \left| \frac{ 2 a_{p_{N+1}} C^\frac{1}{4}}{\pi p_{N+1}c_m} e^{-\frac{2 \pi  p_{N+1}}{\sqrt{C}}} \right|^\frac{2}{m} \leq \frac{\delta  E_m}{4}. 
\ee
This holds true for all $p_{N+1}$ large enough in terms of $\delta$.

Adding Eqs. \eqref{517start} -- \eqref{eq5p19} and using Eq. \eqref{thisis511}, we see that for all $p_N$ that are large enough in terms of $\delta$ and also belong to $\mathfrak{B}^+_{\Lambda,\delta}$ we have that
\be
\label{eq525forrouche}
\left| G(s) - \Lambda_N(E,s) \right| \leq \frac{3\delta E_m}{4} \left| \frac{ 2 a_{p_{N+1}} C^\frac{1}{4}}{\pi p_{N+1}} e^{-\frac{2 \pi  p_{N+1}}{\sqrt{C}}} \right|.
\ee
Combining Eqs. \eqref{eq516forrouche} and \eqref{eq525forrouche}, for $p_N$ large enough and in $\mathfrak{B}^+_{\Lambda,\delta}$, and for $s$ on any of the circles $C_{j,\Delta}$ we have
\be
\left| G(s) - \Lambda_N(E,s) \right| < \left| G(s) \right|.
\ee
For $\delta$ small enough the circles $C_{j,\Delta}$ do not intersect. Then, since $G(s)$ has exactly one root inside each $C_{j,\Delta}$, from Rouch\'e's theorem it follows that $\Lambda_N(E,s)$ has exactly one root inside each of the circles~$C_{j,\Delta}$. Thus, the set $A_{\Lambda,N}$ has exactly one element in each of the following disks
\be
\label{thisis533}
D_{j,\delta} \coloneqq \{ z\in\mathbb{C} : \Bigg|z- \Bigg| \frac{2C^\frac{1}{4}}{\pi c_m}\Bigg|^\frac{1}{m} e^{\frac{2\pi i j}{m}} \Bigg| < \Bigg| \frac{2C^\frac{1}{4}}{\pi c_m} \Bigg|^\frac{1}{m}  \delta \}, \quad j=1,\dots,m,
\ee
centered at the points of $\mathcal{S}_m^\mrm{even,+}$.

Taking into account relations \eqref{thisis527} and \eqref{thisis533} we conclude: For each fixed small $\delta>0$ we constructed a set $\mathfrak{B}^+_{\Lambda,\delta}$ which has density $1/2-O\lb\delta\rb$ and has the additional property that for $p_N\to\infty$ with $p_N$ in $\mathfrak{B}^+_{\Lambda,\delta}$, all the limit points of the sequence of sets $A_{\Lambda,N}$ are at distance $O_{m,\Lambda}(\delta)$ from our fixed set $\mathcal{S}_m^\mrm{even,+}$. Lastly, one now takes $\delta$ tending to zero slowly, to obtain a set $\mathfrak{B}^+_{\Lambda}$ having the properties from the statement of the theorem. This concludes the proof in the (even,$+$) case. The proof in the (even,$-$) case is similar, working with $\mathfrak{B}^-_{\Lambda}$ instead of $\mathfrak{B}^+_{\Lambda}$. The proofs in the (odd,$+$), (odd,$-$) cases are similar, with the additional observation that by construction the function $\Lambda_N(E,s)$ has a zero at the central point, in addition to having zeros in each of the above small disks. This completes the proof of the theorem.

\textbf{Acknowledgments. } B. S. would like to acknowledge the Northwestern University Amplitudes and Insights group, Department of Physics and Astronomy, and Weinberg College for support. The work of B. S. was supported in part by the Department of Energy under Award Number DE-SC0021485.

\end{proof}


\begin{thebibliography}{99}

\bibitem{sievebook1}
P.~T.~Bateman and H.~G.~Diamond,
{\it Analytic Number Theory: An Introductory Course,}
World Scientific Publishing Co., Hackensack, NJ (2004).

\bibitem{satotate1}
L.~Clozel, M.~Harris, and R.~Taylor, {\it Automorphy for some l-adic lifts of automorphic mod l Galois representations,} Publ. math. IHES {\bf 108}, 1 (2008).

\bibitem{cojocaru}
A.~C.~Cojocaru and M.~Ram Murty,
{\it An introduction to sieve methods and their applications,} Cambridge University Press, Cambridge (2006).

\bibitem{Davenport}
H.~Davenport,
{\it Multiplicative number theory,} Springer-Verlag, New York (2000).

\bibitem{GradRyzhik}
I.~S.~Gradshteyn and I.~M.~Ryzhik, 
{\it Table of Integrals, Series, and Products,}
Academic Press, New York-London-Toronto, Ont. (1980).

\bibitem{sievebook2}
H.~Halberstam and H.~E.~Richert, {\it Sieve Methods}, Academic Press, London-New York (1974).

\bibitem{satotate2}
M.~Harris, N.~Shepherd-Barron, and R.~Taylor, {\it A family of Calabi-Yau varieties and potential automorphy,} Ann. Math. {\bf 171}, 2 (2010), 779-813.

\bibitem{KowalskiIwaniec}
H.~Iwaniec and E.~Kowalski,
{\it Analytic number theory,} American Mathematical Society, Providence, RI (2004).

\bibitem{Ma1}
Yu. Matiyasevich, {\it A few factors from the Euler product are sufficient for calculating the zeta function 
with high precision}, Tr. Mat. Inst. Steklova {\bf 299} (2017),  192--202.

\bibitem{NastasescuZaharescu}
M.~Nastasescu and A.~Zaharescu, {\it A Class of Approximations to the Riemann Zeta Function}, J. Math. Anal. Appl., \textbf{514}, 2 (2022), 126344.

\bibitem{PenneyPomerance}
D.~E. Penney and C.~Pomerance, {\it A Search for Elliptic Curves With Large Rank,} Math. Comput., {\bf 28},  127 (1974), 851-853.

\bibitem{satotate3}
R.~Taylor, {\it Automorphy for some l-adic lifts of automorphic mod l Galois representations II,} Publ. math. IHES {\bf 108}, (2008) 183--239.

\bibitem{Titchmarsh}
E.~C.~Titchmarsh, {\it The theory of the Riemann zeta-function}, The Clarendon Press, Oxford University Press, New York (1986).

\bibitem{wilescitation}
A.~Wiles, {\it Birch and Swinnerton-Dyer Conjecture,} \href{https://www.claymath.org/sites/default/files/birchswin.pdf}{Clay Mathematics Institute official problem description}.

\end{thebibliography}
\end{document}